\newtheorem{theorem}{Theorem}[section]
\newtheorem{proposition}[theorem]{Proposition}
\newtheorem{corollary}[theorem]{Corollary}
\newtheorem{lemma}[theorem]{Lemma}
\newtheorem{remark}[theorem]{Remark}
\newtheorem{example}[theorem]{Example}
\DeclareMathOperator{\diam}{diam\,}
\newcommand{\1}{{\bm 1}}
\numberwithin{equation}{section}
\begin{document}

\begin{center}
\large\bf
A Classification of Graphs Through \\
Quadratic Embedding Constants and Clique Graph Insights
\end{center}

\bigskip

\begin{center}
Edy Tri Baskoro\\
Combinatorial Mathematics Research Group \\
Faculty of Mathematics and Natural Sciences \\
Institut Teknologi Bandung
Jalan Ganesa 10 Bandung, Indonesia \\
ebaskoro@math.itb.ac.id

\bigskip

Nobuaki Obata\\
Center for Data-driven Science and Artificial Intelligence \\
Tohoku University\\
Sendai 980-8576 Japan \\
and \\
Combinatorial Mathematics Research Group \\
Faculty of Mathematics and Natural Sciences \\
Institut Teknologi Bandung
Jalan Ganesa 10 Bandung, Indonesia \\
obata@tohoku.ac.jp

\end{center}

\bigskip

\begin{quote}
\textbf{Abstract}\enspace
The quadratic embedding constant (QEC)
of a graph $G$ is a new numeric invariant,
which is defined in terms of the distance matrix
and is denoted by $\mathrm{QEC}(G)$.
By observing graph structure of the maximal cliques
(clique graph),
we show that a graph $G$ with $\mathrm{QEC}(G)<-1/2$
admits a ``cactus-like'' structure.
We derive a formula for the quadratic embedding constant
of a graph consisting of two maximal cliques.
As an application we discuss characterization of
graphs along the increasing sequence of $\mathrm{QEC}(P_d)$,
where $P_d$ is the path on $d$ vertices.
In particular, we determine graphs $G$ satisfying
$\mathrm{QEC}(G)<\mathrm{QEC}(P_5)$.
\end{quote}

\begin{quote}
\textbf{Key words}\enspace
cactus-like graph,
clique graph,
distance matrix,
quadratic embedding constant,
\end{quote}

\begin{quote}
\textbf{MSC}\enspace
primary:05C50  \,\,  secondary:05C12 05C76
\end{quote}

\section{Introduction}

In the recent paper \cite{Obata-Zakiyyah2018} 
the \textit{quadratic embedding constant} 
(\textit{QE constant} for short)
of a finite connected graph $G=(V,E)$ with $|V|\ge2$ 
is defined by
\begin{equation}\label{0eqn:def od QEC(G)}
\mathrm{QEC}(G)
=\max\{\langle f,Df \rangle\,;\, f\in C(V), \,
\langle f,f \rangle=1, \, \langle \1,f \rangle=0\},
\end{equation}
where $C(V)$ is the space of all $\mathbb{R}$-valued functions
on $V$, $\bm{1}\in C(V)$ the constant function
taking value 1, and $\langle\cdot,\cdot\rangle$ the 
canonical inner product.
The QE constant is profoundly related to
the quadratic embedding of a graph in a Euclidean space
\cite{Jaklic-Modic2013, Jaklic-Modic2014, 
Schoenberg1935,Schoenberg1938}
or more generally to Euclidean distance geometry
\cite{Alfakih2018, Balaji-Bapat2007,
Deza-Laurent1997,Liberti-Lavor-Maculan-Mucherino2014}.
In fact, it is essential to note that
a graph $G$ admits a quadratic embedding in Euclidean space
if and only if $\mathrm{QEC}(G)\le0$.
In recent years, the QE constant has garnered growing interest
as a new numeric invariant of graphs.
The QE constants of graphs of particular classes
are known explicitly,
see \cite{Choudhury-Nandi2023,Irawan-Sugeng2021,
Mlotkowski2022,Obata2017,
Obata2023a,Obata2023b,Obata-Zakiyyah2018,Purwaningsih-Sugeng2021},
and the formulas in relation to graph operations are 
established in \cite{Lou-Obata-Huang2022, MO-2018}.
Moreover, a table of the QE constants of graphs
on $n\le5$ vertices is available \cite{Obata-Zakiyyah2018},
where the value of the graph No.~12 on $n=5$ vertices is wrong
and the correction is found in \cite{Baskoro-Obata2021}.

With the above mentioned background,
we are naturally led to the forward-thinking project
of classifying graphs by means of the QE constants.
In \cite{Baskoro-Obata2021} we initiated an attempt
to classify graphs along with $\mathrm{QEC}(P_d)$, 
the QE constant of the path $P_d$ on $d\ge2$ vertices,
which forms an increasing sequence as
\begin{equation}\label{01eqn:scale}
-1=\mathrm{QEC}(P_2)<
\mathrm{QEC}(P_3)<
\dotsb
<\mathrm{QEC}(P_d)<
\dotsb \rightarrow -\frac12\,.
\end{equation}
In fact, it is known \cite{Mlotkowski2022} that
\begin{equation}\label{01eqn:QEC(P_d)}
\mathrm{QEC}(P_d)
=-\left(1+\cos\frac{\pi}{d}\right)^{-1},
\qquad d\ge2.
\end{equation}
In this paper, we study graphs $G=(V,E)$ with
$\mathrm{QEC}(G)<-1/2$ by means of 
the \textit{clique graph} $\Gamma(G)$.
Here the clique graph is 
a graph $\Gamma(G)=(\mathcal{V},\mathcal{E})$,
where $\mathcal{V}$ is the set of maximal cliques of $G$
and $\{H_1,H_2\}\in\mathcal{E}$ if and only if
$H_1\neq H_2$ and $H_1\cap H_2\neq\emptyset$.
A key result is that the clique graph of 
a graph $G$ with $\mathrm{QEC}(G)<-1/2$ is a tree.
Then, combining the result on forbidden subgraphs
\cite{Baskoro-Obata2021}, we conclude that
a graph $G$ with $\mathrm{QEC}(G)<-1/2$ consists of
maximal cliques which form a ``cactus-like'' structure,
for the precise statement see 
Theorem \ref{04thm:necessary condition}.
As an application we discuss graphs $G$ satisfying
$\mathrm{QEC}(G)<\mathrm{QEC}(P_5)$.

It is noteworthy that the QE constant provides
additional information to the distance spectra
and raises interesting questions,
for the distance spectra see e.g.,
\cite{Aouchiche-Hansen2014,Balaji-Bapat2007,
Indulal-Gutman2008,Lin-Hong-Wang-Shub2013}.
In fact, it is known \cite{Lou-Obata-Huang2022}
that $\delta_2(G)\le \mathrm{QEC}(G)
<\delta_1(G)$, where $\delta_1(G)$ and $\delta_2(G)$ are the
largest and the second largest eigenvalues of the
distance matrix of $G$, respectively.
It is straightforward to see that
$\delta_2(G)=\mathrm{QEC}(G)$ holds if the distance matrix
of $G$ has a constant row sum (in some literatures,
such a graph is called \textit{transmission regular}).
But the converse is not true as the paths $P_n$ with even $n$
are counter-examples \cite{Mlotkowski2022}.
In this aspect characterization of graphs satisfying 
$\delta_2(G)=\mathrm{QEC}(G)$ is an interesting question.
On the other hand, 
the second largest eigenvalue $\delta_2(G)$ has
been adopted for classifying graphs,
in particular, the distance-regular graphs $G$ with $\delta_2(G)\le0$ are
classified \cite{Koolen-Shpectorov1994}.
In the recent paper \cite{Guo-Zhou2023}
the bicyclic graphs and the split graphs $G$ with $\delta_2(G)<-1/2$ are
characterized.
A detailed comparison is naturally expected to be very interesting
and will appear elsewhere.

The paper is organized as follows.
In Section 2 we assemble some basic notations for the QE constants.
In Section 3 we examine some properties of 
the clique graph $\Gamma(G)$ and show 
a relation between the diameters of $G$ and $\Gamma(G)$ 
(Propositions \ref{03prop:diam(CG)}
and \ref{03prop:Clique graph is tree}). 
In Section 4 we prove 
the main result (Theorem \ref{04thm:necessary condition}).
In Section 5 we determine a graph with exactly two
maximal cliques.
In Section 6 we discuss graphs $G$ satisfying
$\mathrm{QEC}(G)<\mathrm{QEC}(P_5)$.
In Appendix we derive a formula for the QE constant of
a graph with exactly two maximal cliques.

\section{Quadratic Embedding Constants}

Throughout the paper a graph $G=(V,E)$ is a pair,
where $V$ is a non-empty finite set and $E$ is a set of
two-element subsets $\{x,y\}\subset V$.
As usual, elements of $V$ and $E$ are respectively called
a \textit{vertex} and an \textit{edge}. 
Two vertices $x,y\in V$ are called \textit{adjacent}
if $\{x,y\}\in E$, and we also write $x\sim y$.
In that case we have $x\neq y$ necessarily.

For $s\ge0$ a sequence of vertices $x=x_0,x_1,\dots, x_s=y\in V$
is called a \textit{walk} connecting $x$ and $y$ of
length $s$ if they are successively adjacent:
\begin{equation}\label{02eqn:walk}
x=x_0\sim x_1 \sim \dotsb \sim x_s=y.
\end{equation}
A graph $G$ is called \textit{connected}
if any pair of vertices are connected by a walk.
In this paper a graph is always assumed to be connected
unless otherwise stated.
The length of a shortest walk connecting two vertices
$x,y\in V$ is called the \textit{graph distance} between
$x$ and $y$ in $G$ and is denoted by $d_G(x,y)$. 
A walk in \eqref{02eqn:walk} is called
a \textit{shortest path} connecting $x$ and $y$
if $s=d_G(x,y)$ holds.
In that case $x_0,x_1,\dots, x_s$ are
mutually distinct.

As is defined in \eqref{0eqn:def od QEC(G)},
the \textit{QE constant} of a graph $G=(V,E)$,
denoted by $\mathrm{QEC}(G)$, is the conditional maximum of
the quadratic function $\langle f, Df\rangle$
associated to the distance matrix $D=[d_G(x,y)]$
subject to two constraints $\langle f, f\rangle=1$ and
$\langle \bm{1}, f\rangle=0$.

In this section we assemble some basic results,
for more details 
see e.g., \cite{Baskoro-Obata2021,MO-2018,Obata-Zakiyyah2018}.
We first recall a computational formula
based on the standard method of Lagrange's multipliers.

\begin{proposition}[\cite{Obata-Zakiyyah2018}]
\label{02prop:QEC}
Let $D$ be the distance matrix of a graph $G=(V,E)$ 
on $n=|V|$ vertices with $n\ge3$.
Let $\mathcal{S}$ be the set of all stationary points  
$(f,\lambda,\mu)$ of
\begin{equation}\label{02eqn:basic Lagrange}
\varphi(f,\lambda,\mu)
=\langle f,Df\rangle
 -\lambda(\langle f,f\rangle-1)
 -\mu\langle \1,f\rangle,
\end{equation}
where $f\in C(V)\cong \mathbb{R}^n$, $\lambda\in\mathbb{R}$
and $\mu\in\mathbb{R}$.
Then we have
\[
\mathrm{QEC}(G)
=\max\{\lambda\,;\, (f,\lambda,\mu)\in \mathcal{S}\}.
\]
\end{proposition}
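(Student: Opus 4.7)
The plan is to recognize this as a textbook Lagrange-multiplier statement and organize the proof into three short steps: (i) characterize $\mathcal{S}$ via partial derivatives, (ii) show that at any stationary point the multiplier $\lambda$ equals the value $\langle f,Df\rangle$, and (iii) invoke the Lagrange multiplier theorem to produce, from the constrained maximizer, an element of $\mathcal{S}$ attaining $\mathrm{QEC}(G)$.

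First I would compute the three partial derivatives of $\varphi$ and observe that $(f,\lambda,\mu)\in\mathcal{S}$ is equivalent to the system
\begin{equation*}
2Df-2\lambda f-\mu\1=0,\qquad \langle f,f\rangle=1,\qquad \langle \1,f\rangle=0.
\end{equation*}
Taking the inner product of the first equation with $f$ and using the two constraints, the cross terms $\mu\langle\1,f\rangle$ and $\lambda\langle f,f\rangle$ collapse to $0$ and $\lambda$ respectively, yielding $\lambda=\langle f,Df\rangle$. Since $f$ satisfies the constraints in the definition \eqref{0eqn:def od QEC(G)}, this gives $\lambda\le \mathrm{QEC}(G)$ for every $(f,\lambda,\mu)\in\mathcal{S}$.

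For the reverse inequality I would note that the feasible set $\{f\in C(V):\langle f,f\rangle=1,\,\langle\1,f\rangle=0\}$ is compact (a great sphere in the hyperplane $\1^{\perp}$, nonempty since $n\ge 3$) and the quadratic form $\langle f,Df\rangle$ is continuous, so a maximizer $f_{\ast}$ exists and attains the value $\mathrm{QEC}(G)$. To apply the Lagrange multiplier theorem I would verify the regularity condition: the gradients of the two constraint functions are $2f_{\ast}$ and $\1$, and these are linearly independent because $f_{\ast}\neq 0$ is orthogonal to $\1$. Hence there exist $\lambda_{\ast},\mu_{\ast}\in\mathbb{R}$ with $(f_{\ast},\lambda_{\ast},\mu_{\ast})\in\mathcal{S}$, and by step (ii) we get $\lambda_{\ast}=\langle f_{\ast},Df_{\ast}\rangle=\mathrm{QEC}(G)$.

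There is no real obstacle here; the only subtlety is the constraint qualification, which is handled by the observation that $f_{\ast}\perp\1$ prevents the two gradients from being parallel. Combining the two inequalities yields the asserted equality, and the existence of a maximizer guarantees that the supremum over $\mathcal{S}$ is attained, so it can be written as a maximum.
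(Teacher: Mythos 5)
Your proof is correct and is exactly the standard Lagrange-multiplier argument that the paper attributes to \cite{Obata-Zakiyyah2018}: the identity $\lambda=\langle f,Df\rangle$ at stationary points gives one inequality, and compactness of the feasible sphere plus the constraint qualification (linear independence of $2f_*$ and $\1$, guaranteed by $f_*\perp\1$, $f_*\neq0$) gives the other. Nothing is missing.
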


In general, a graph $H=(V^\prime, E^\prime)$ is called a
\textit{subgraph} of $G=(V,E)$
if $V^\prime\subset V$ and $E^\prime\subset E$.
If both $G$ and $H$ are connected,
they have their own graph distances.
If they coincide in such a way that
\[
d_H(x,y)=d_G(x,y),
\qquad x,y\in V^\prime,
\]
we say that $H$ is \textit{isometrically embedded} in $G$.
The next assertions are immediate from definition but useful.

\begin{proposition}[\cite{Obata2023a, Obata2023b}]
\label{02prop:isometric embedding}
Let $G=(V,E)$ be a graph and $H=(V^\prime,E^\prime)$ a subgraph.
\begin{enumerate}
\setlength{\itemsep}{0pt}
\item[\upshape (1)] If $H$ is isometrically embedded in $G$,
then $H$ is an induced subgraph of $G$.
\item[\upshape (2)] If $H$ is an induced subgraph of $G$ and
\[
\mathrm{diam\,}(H)=\max\{d_H(x,y)\,;\, x,y\in V^\prime\}\le 2,
\]
then $H$ is isometrically embedded in $G$.
\end{enumerate}
\end{proposition}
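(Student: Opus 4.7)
The plan is to handle the two parts separately, with part (1) being a near-immediate edge-matching argument and part (2) requiring a case analysis on the possible values of $d_G(x,y)$.

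For part (1), I would start by noting what an induced subgraph means: $H=(V',E')$ is induced in $G=(V,E)$ precisely when, for every pair $x,y\in V'$, $\{x,y\}\in E$ implies $\{x,y\}\in E'$. So suppose $H$ is isometrically embedded and take any $x,y\in V'$ with $\{x,y\}\in E$. Then $d_G(x,y)=1$, and by isometry $d_H(x,y)=1$ as well, which forces $\{x,y\}\in E'$. This gives the induced property immediately; the only minor item worth mentioning is that isometric embedding presupposes that $H$ is connected, so $d_H(x,y)$ is well defined.

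For part (2), I would first record the universal inequality $d_H(x,y)\ge d_G(x,y)$ for all $x,y\in V'$, which holds because any walk in $H$ is a walk in $G$. The reverse inequality is obtained by splitting on $d_G(x,y)$. If $d_G(x,y)=0$, then $x=y$ and there is nothing to show. If $d_G(x,y)=1$, then $\{x,y\}\in E$, and since $H$ is induced, $\{x,y\}\in E'$, giving $d_H(x,y)=1$. If $d_G(x,y)\ge 2$, then since $\mathrm{diam}(H)\le 2$ we have $d_H(x,y)\le 2$, which combined with $d_H(x,y)\ge d_G(x,y)\ge 2$ forces $d_G(x,y)=d_H(x,y)=2$. (The case $d_G(x,y)\ge 3$ is actually ruled out by the hypothesis: if such a pair existed, then $d_H(x,y)\ge 3$ would contradict $\mathrm{diam}(H)\le 2$.)

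No step is a real obstacle here; the statement is essentially a bookkeeping exercise about when graph distances are preserved under passing to subgraphs. If anything, the subtlest point is realizing that the diameter hypothesis in (2) is sharp in the sense that distance $2$ is exactly the furthest that an induced subgraph can be guaranteed to preserve: for $d_G(x,y)\ge 3$ one can have shortest $G$-paths leaving $V'$, so an induced subgraph need not be isometrically embedded in general. I would therefore conclude the proof with a brief remark (or just leave it implicit) that the $\mathrm{diam}\le 2$ condition is what prevents this pathology by forcing every $H$-distance into the range $\{0,1,2\}$ where the induced property alone controls it.
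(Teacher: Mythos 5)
Your proof is correct, and it is exactly the standard argument: the paper itself gives no proof, stating only that the assertions are ``immediate from definition'' and citing earlier work, so your write-up supplies precisely the bookkeeping the authors leave implicit. Both parts check out, including the observation that the inequality $d_H(x,y)\ge d_G(x,y)$ together with the induced property and $\diam(H)\le 2$ pins every distance down.
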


\begin{proposition}[\cite{Obata-Zakiyyah2018}]
\label{02prop:isometrically embedded subgraphs}
Let $G=(V,E)$ and $H=(V^\prime,E^\prime)$ be two graphs
with $|V|\ge2$ and $|V^\prime|\ge2$.
If $H$ is isometrically embedded in $G$, we have
\begin{equation}\label{02eqn:QEC(H)le QEC(G)}
\mathrm{QEC}(H)\le \mathrm{QEC}(G).
\end{equation}
In particular, \eqref{02eqn:QEC(H)le QEC(G)} holds
if $H$ is an induced subgraph of $G$ and
$\mathrm{diam\,}(H)\le 2$.
\end{proposition}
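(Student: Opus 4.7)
The plan is to exhibit, for each admissible competitor in the variational problem defining $\mathrm{QEC}(H)$, a corresponding admissible competitor for $\mathrm{QEC}(G)$ with the same value of the quadratic form, so that the defining maximum for $G$ dominates that for $H$. Concretely, I would take any $f\in C(V^\prime)$ with $\langle f,f\rangle=1$ and $\langle\1,f\rangle=0$, and extend it by zero to $\tilde f\in C(V)$, namely $\tilde f(x)=f(x)$ for $x\in V^\prime$ and $\tilde f(x)=0$ otherwise. Both constraints transfer at once: $\langle\tilde f,\tilde f\rangle=\langle f,f\rangle=1$ and $\sum_{x\in V}\tilde f(x)=\sum_{x\in V^\prime}f(x)=0$.

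The key step is to convert the quadratic form on $G$ into the one on $H$ using the isometric embedding hypothesis. Writing $D_G$ and $D_H$ for the respective distance matrices and using that $\tilde f$ is supported in $V^\prime$,
\begin{equation*}
\langle\tilde f,D_G\tilde f\rangle
=\sum_{x,y\in V^\prime}d_G(x,y)\,f(x)f(y)
=\sum_{x,y\in V^\prime}d_H(x,y)\,f(x)f(y)
=\langle f,D_H f\rangle,
\end{equation*}
where the middle equality is exactly the isometric embedding condition $d_H(x,y)=d_G(x,y)$ on $V^\prime\times V^\prime$. Applying \eqref{0eqn:def od QEC(G)} to $G$ gives $\langle f,D_H f\rangle=\langle\tilde f,D_G\tilde f\rangle\le\mathrm{QEC}(G)$, and taking the maximum over all admissible $f\in C(V^\prime)$ yields $\mathrm{QEC}(H)\le\mathrm{QEC}(G)$.

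For the concluding assertion, I would simply invoke Proposition \ref{02prop:isometric embedding}(2), which asserts that an induced subgraph with $\diam(H)\le2$ is automatically isometrically embedded, and then apply the main inequality. There is no real obstacle in this argument; the only subtle point is that zero-extension would fail for an arbitrary subgraph because then $d_H(x,y)$ could strictly exceed $d_G(x,y)$, invalidating the middle equality in the display above, so the isometric hypothesis is used exactly where it is essential.
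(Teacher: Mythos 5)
Your argument is correct and is exactly the standard proof of this fact (the paper itself only cites \cite{Obata-Zakiyyah2018} without reproving it): extend an admissible competitor by zero, note that the constraints and the quadratic form are preserved because the distances agree on $V^\prime\times V^\prime$, and take the maximum. The reduction of the final assertion to Proposition \ref{02prop:isometric embedding}(2) is also the intended route, so there is nothing to add.
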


Since any graph $G=(V,E)$ with $|V|\ge2$ contains at least
one edge, it has a subgraph $K_2$ isometrically embedded in $G$.
It then follows from Proposition
\ref{02prop:isometrically embedded subgraphs} that
\[
\mathrm{QEC}(G)\ge \mathrm{QEC}(K_2)=-1.
\]
Moreover, we have the following assertion,
see also Proposition \ref{06prop:QEC=-1}.

\begin{proposition}[\cite{Baskoro-Obata2021}]
\label{02prop:QEC ge -1}
For a graph $G$ we have $\mathrm{QEC}(G)=-1$ 
if and only if $G$ is a complete graph.
\end{proposition}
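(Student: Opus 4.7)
The plan is to prove the two implications separately; both are short once the groundwork from Propositions \ref{02prop:isometric embedding} and \ref{02prop:isometrically embedded subgraphs} is invoked.

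For the ``if'' direction, I would compute $\mathrm{QEC}(K_n)$ directly from the definition \eqref{0eqn:def od QEC(G)}. The distance matrix of $K_n$ is $D=J-I$, where $J$ is the all-ones matrix and $I$ the identity. Any admissible $f\in C(V)$ satisfies $\langle \1,f\rangle=0$, hence $Jf=0$, so $Df=-f$ and $\langle f,Df\rangle=-\langle f,f\rangle=-1$. Taking the maximum over all such $f$ yields $\mathrm{QEC}(K_n)=-1$.

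For the ``only if'' direction, I would argue contrapositively. Suppose $G$ is not complete. Then there exist vertices $x,y\in V$ with $d_G(x,y)\ge 2$. Taking a shortest path from $x$ to $y$ and truncating to its first three vertices $x=x_0\sim x_1\sim x_2$ with $d_G(x_0,x_2)=2$, the subgraph induced on $\{x_0,x_1,x_2\}$ is $P_3$ (the nonadjacency of $x_0$ and $x_2$ follows because $d_G(x_0,x_2)=2$). Since $\mathrm{diam\,}(P_3)=2$, Proposition \ref{02prop:isometric embedding}(2) guarantees that $P_3$ is isometrically embedded in $G$, and then Proposition \ref{02prop:isometrically embedded subgraphs} gives $\mathrm{QEC}(G)\ge \mathrm{QEC}(P_3)$. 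Using the explicit formula \eqref{01eqn:QEC(P_d)} (or a direct $3\times 3$ calculation via Proposition \ref{02prop:QEC}), one obtains $\mathrm{QEC}(P_3)=-(1+\cos(\pi/3))^{-1}=-2/3>-1$. Combined with the universal lower bound $\mathrm{QEC}(G)\ge-1$ recalled just above the statement, this yields $\mathrm{QEC}(G)\ge -2/3>-1$, completing the contrapositive.

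No step here is truly an obstacle: the ``if'' direction reduces to a one-line observation about $J-I$ acting on the hyperplane $\1^\perp$, and the ``only if'' direction is a textbook application of the isometric-embedding machinery already assembled. The only point requiring a moment's care is verifying that a length-$2$ shortest path in $G$ actually induces $P_3$ (rather than $K_3$), and this is immediate from the choice of $x_2$ at distance $2$ from $x_0$.
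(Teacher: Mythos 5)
Your proof is correct: the computation $Df=-f$ on $\1^{\perp}$ for $K_n$ and the isometric embedding of $P_3$ (with $\mathrm{QEC}(P_3)=-2/3>-1$) via Propositions \ref{02prop:isometric embedding} and \ref{02prop:isometrically embedded subgraphs} together establish both directions. The paper itself only cites this result from \cite{Baskoro-Obata2021} without reproducing a proof, and your argument is essentially the standard one given there, so there is nothing further to compare.
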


Thus, in order to determine $\mathrm{QEC}(G)$ of 
a graph $G$ which is not a complete graph,
it is sufficient to seek out the stationary points of
$\varphi(f,\lambda,\mu)$ with $\lambda>-1$ and 
then to specify the maximum of $\lambda$ appearing therein.

\section{Clique Graphs}

Most of this section follows a standard argument; 
however, to avoid ambiguity, we present some basic
properties of clique graphs.

Let $G=(V,E)$ be a graph (always assumed to be finite and connected).
For a non-empty subset $H\subset V$,
the subgraph induced by $H$ is denoted by $\langle H\rangle$.
By definition the vertex set of $\langle H\rangle$ is $H$ itself
and two-element subset $\{x,y\}\subset H$ belongs to the edge set 
of $\langle H\rangle$ if and only if $\{x,y\}\in E$.
A non-empty subset $H\subset V$ is called a \textit{clique} of $G$
if $\langle H \rangle$ is a complete graph.
A clique is called \textit{maximal} if
it is maximal in the family of cliques with respect to the
inclusion relation.
Except for notation, 
`clique' may also refer to the subgraph it induces.

Obviously, for a clique $H_0$ there exists a maximal clique
$H$ such that $H_0\subset H$.
In particular, for two vertices $a\sim b$ there 
exists a maximal clique containing $\{a,b\}$.

\begin{lemma}\label{03lem:two maximal cliques}
Let $H_1$ and $H_2$ be maximal cliques of a graph $G$
such that $H_1\neq H_2$.
Then $H_1\backslash H_2\neq\emptyset$ and
$H_2\backslash H_1\neq\emptyset$.
Moreover, there exist $a\in H_1\backslash H_2$
and $b\in H_2\backslash H_1$ such that $a\not\sim b$.
\end{lemma}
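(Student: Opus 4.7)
The plan is to handle the two assertions separately, both with short arguments that rely directly on maximality of $H_1$ and $H_2$.

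For the first assertion, I would argue by contradiction. Suppose $H_1\setminus H_2=\emptyset$, i.e.\ $H_1\subset H_2$. Since $H_1$ is a clique of $G$ contained in the clique $H_2$, maximality of $H_1$ forces $H_1=H_2$, contrary to the hypothesis $H_1\neq H_2$. Hence $H_1\setminus H_2\neq\emptyset$, and by symmetry $H_2\setminus H_1\neq\emptyset$ as well. This step is entirely routine.

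For the second assertion, I would again proceed by contradiction. Suppose that every $a\in H_1\setminus H_2$ and every $b\in H_2\setminus H_1$ satisfy $a\sim b$. The key observation is then that $H_1\cup H_2$ is itself a clique of $G$. To verify this, pick any two distinct vertices $u,v\in H_1\cup H_2$: if both lie in $H_1$ or both lie in $H_2$ they are adjacent because $H_1$ and $H_2$ are cliques; the only remaining case (up to symmetry) is $u\in H_1\setminus H_2$ and $v\in H_2\setminus H_1$, in which case $u\sim v$ by the standing assumption. Vertices in $H_1\cap H_2$ are handled by either of the first two cases, so all pairs are adjacent.

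Since $H_2\setminus H_1\neq\emptyset$ by the first assertion, we have $H_1\subsetneq H_1\cup H_2$, and $H_1\cup H_2$ is a strictly larger clique containing $H_1$, contradicting the maximality of $H_1$. Therefore there must exist $a\in H_1\setminus H_2$ and $b\in H_2\setminus H_1$ with $a\not\sim b$, completing the proof. I do not anticipate a genuine obstacle here: the lemma is a direct unpacking of the definition of maximal clique, and the only thing to be slightly careful about is covering all the cases (including pairs straddling $H_1\cap H_2$) when verifying that $H_1\cup H_2$ is a clique.
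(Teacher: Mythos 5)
Your proposal is correct and follows essentially the same route as the paper: both parts argue by contradiction, the first using that $H_1\subset H_2$ would violate maximality, and the second by showing that if all cross pairs were adjacent then $H_1\cup H_2$ would be a clique properly containing $H_1$. The case analysis for verifying that $H_1\cup H_2$ is a clique matches the paper's argument exactly.
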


\begin{proof}
Suppose that $H_1\backslash H_2=\emptyset$ or
$H_2\backslash H_1=\emptyset$.
If the former occurs, we have $H_1\subset H_2$.
Since both $H_1$ and $H_2$ are maximal and
$H_1\neq H_2$ by assumption,
we come to a contradiction.

For the second half of the assertion,
suppose that any pair of $x\in H_1\backslash H_2$
and $y\in H_2\backslash H_1$ are adjacent.
We will show that $H_1\cup H_2$ is a clique.
In fact, take a pair of distinct vertices $x,y \in H_1\cup H_2$. 
If $x,y\in H_1$ or $x,y\in H_2$,
they are adjacent since $H_1$ and $H_2$ are cliques.
If otherwise, 
we have $x\in H_1\backslash H_2$ and $y\in H_2\backslash H_1$
or vice versa, and hence $x\sim y$ by assumption.
Consequently, for any pair of distinct 
vertices $x,y\in H_1\cup H_2$ we have $x\sim y$,
namely, $H_1\cup H_2$ becomes a clique.
Since $H_1\cup H_2$ contains $H_1$ and $H_2$ properly,
we come to a contradiction.
\end{proof}

For a graph $G=(V,E)$
let $\mathcal{V}$ be the set of all maximal cliques
and $\mathcal{E}$ the set of two-element subsets
$\{H_1,H_2\}\subset \mathcal{V}$ such that
$H_1\cap H_2 \neq\emptyset$.
Then $\Gamma(G)=(\mathcal{V},\mathcal{E})$ becomes a
(in fact, connected) graph,
which is called the \textit{clique graph} of $G$.
Accordingly, 
for two maximal cliques $H_1$ and $H_2$ of $G$ we write
$H_1\sim H_2$ if $H_1\neq H_2$ and $H_1\cap H_2\neq\emptyset$.
For more information on the clique graph,
see e.g., \cite{Roberts-Spencer1971,Szwarcfiter2003}.

\begin{lemma}\label{03lem:Gamma(G) is connected}
The clique graph $\Gamma(G)$ of a graph $G$ 
 (always assumed to be connected) is connected.
\end{lemma}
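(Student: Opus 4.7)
The plan is to exploit connectedness of $G$ to build, for any two maximal cliques, a walk between them in $\Gamma(G)$.

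First I would fix two arbitrary maximal cliques $H, H' \in \mathcal{V}$ and pick vertices $x \in H$ and $y \in H'$. Since $G$ itself is connected, there exists a walk
\[
x = v_0 \sim v_1 \sim \dotsb \sim v_s = y
\]
in $G$. Then for each $i = 1, \dots, s$, the two-element set $\{v_{i-1}, v_i\}$ is a clique of $G$ (being an edge), so by the remark preceding Lemma \ref{03lem:two maximal cliques} it is contained in some maximal clique $M_i \in \mathcal{V}$.

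Next I would verify that $M_1, M_2, \dots, M_s$ is a walk in $\Gamma(G)$ from a neighbor of $H$ to a neighbor of $H'$. For consecutive cliques $M_i$ and $M_{i+1}$, both contain the vertex $v_i$, so $M_i \cap M_{i+1} \neq \emptyset$; hence either $M_i = M_{i+1}$ or $M_i \sim M_{i+1}$ in $\Gamma(G)$. Deleting repetitions yields a genuine walk. Likewise $H$ and $M_1$ both contain $v_0 = x$, so either $H = M_1$ or $H \sim M_1$, and similarly $M_s = H'$ or $M_s \sim H'$. Concatenating gives a walk in $\Gamma(G)$ connecting $H$ to $H'$.

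There is essentially no obstacle here; the only thing to be careful about is the trivial case where $G$ consists of a single vertex (excluded by the standing assumption $|V| \ge 2$ in the QEC context, though the definition of $\Gamma(G)$ itself is fine with $|\mathcal{V}| = 1$, in which case connectedness is vacuous) and the case $s = 0$, which forces $x = y$ so that $H$ and $H'$ share this vertex and are either equal or adjacent in $\Gamma(G)$. Both degenerate cases are handled by the same argument.
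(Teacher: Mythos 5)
Your proof is correct and follows essentially the same approach as the paper: lift a walk in $G$ to a walk in $\Gamma(G)$ by covering each edge with a maximal clique and using shared vertices to link consecutive cliques. The only cosmetic difference is that the paper invokes Lemma \ref{03lem:two maximal cliques} to pick endpoints $a\in H_1\backslash H_2$ and $b\in H_2\backslash H_1$ (forcing the walk to have length $s\ge 1$), whereas you take arbitrary representatives and dispose of the degenerate case $s=0$ directly.
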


\begin{proof}
Let $H_1, H_2$ be two maximal cliques such that $H_1\neq H_2$.
By Lemma \ref{03lem:two maximal cliques} we may choose 
$a\in H_1\backslash H_2$ and $b\in H_2\backslash H_1$.
Since $G$ is connected,
there exists a walk connecting $a$ and $b$, say,
\[
a=x_0\sim x_1\sim \dotsb \sim x_s=b,
\]
where $s\ge1$.
For $1\le i \le s$ take a maximal clique $J_i$ 
containing $\{x_{i-1}, x_i\}$.
Then $x_i\in J_i\cap J_{i+1}$ implies that
$J_i=J_{i+1}$ or $J_i\sim J_{i+1}$.
Moreover, it follows from $a=x_0\in H_1\cap J_1$
that $H_1=J_1$ or $H_1\sim J_1$.
Similarly, $H_2=J_s$ or $H_2\sim J_s$.
In any case, $H_1$ and $H_2$ are connected by a walk
consisting of $J_1,J_2,\dots, J_s$.
\end{proof}

\begin{example}\normalfont
For a complete graph $K_n$ with $n\ge1$,
a path $P_n$ with $n\ge2$, and a cycle $C_n$ with $n\ge3$ we have
\[
\Gamma(K_n)=K_1,
\qquad
\Gamma(P_n)=P_{n-1},
\qquad
\Gamma(C_n)=C_n\,.
\]
If every maximal clique of a graph $G$ is $K_2$,
the clique graph $\Gamma(G)$ is nothing else but the
line graph of $G$.
Examples of this type are $G=P_n$ and $G=C_n$.
\end{example}

\begin{lemma}\label{03lem:constructing a chain of maximal cliques}
For $d\ge1$ let
\begin{equation}\label{03eqn:connecting x0 and xd}
x_0\sim x_1\sim \dotsb \sim x_d\,,
\end{equation}
be a shortest path connecting $x_0$ and $x_d$,
that is, $d(x_0,x_d)=d$.
For $1\le i\le d$ let $H_i$ be a maximal clique containing
$\{x_{i-1},x_i\}$.
Then,
\begin{equation}\label{03eqn:connecting H_1 and H_d}
H_1\sim H_2\sim \dotsb \sim H_d
\end{equation}
and $d(H_1,H_d)=d-1$.
Hence \eqref{03eqn:connecting H_1 and H_d} is a shortest path
connecting $H_1$ and $H_d$, 
and $H_1, H_2, \dots,H_d$ are mutually distinct.
\end{lemma}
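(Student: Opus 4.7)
The plan is to handle the three claims in sequence, since adjacency in $\Gamma(G)$ and distance together force mutual distinctness for free.

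First I would verify \eqref{03eqn:connecting H_1 and H_d}. By construction $x_i\in H_i\cap H_{i+1}$ for $1\le i\le d-1$, so the intersections are non-empty; the only thing to rule out is $H_i=H_{i+1}$. If that happened, this single maximal clique would contain $x_{i-1}$, $x_i$, $x_{i+1}$, and completeness of the clique would force $x_{i-1}\sim x_{i+1}$. Splicing this edge into \eqref{03eqn:connecting x0 and xd} produces a walk from $x_0$ to $x_d$ of length $d-1$, contradicting $d(x_0,x_d)=d$. Hence $H_i\neq H_{i+1}$ and therefore $H_i\sim H_{i+1}$ in $\Gamma(G)$ for each $i$.

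Next I would establish the distance. The chain \eqref{03eqn:connecting H_1 and H_d} already gives a walk of length $d-1$ in $\Gamma(G)$, so $d(H_1,H_d)\le d-1$. The main point, and the step that needs the most care, is the matching lower bound: I want to lift any walk in $\Gamma(G)$ between $H_1$ and $H_d$ back to a walk in $G$ between $x_0$ and $x_d$. So suppose $H_1=J_0\sim J_1\sim\dots\sim J_s=H_d$ is any walk in $\Gamma(G)$. For $1\le i\le s$ choose $y_i\in J_{i-1}\cap J_i$, which is non-empty by definition of $\Gamma(G)$. Since $x_0\in H_1=J_0$ and $y_1\in J_0$ both lie in the clique $J_0$, either $x_0=y_1$ or $x_0\sim y_1$; similarly $y_i$ and $y_{i+1}$ both lie in $J_i$, and finally $y_s$ and $x_d$ both lie in $J_s=H_d$. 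After deleting any repetitions, the sequence $x_0,y_1,y_2,\dots,y_s,x_d$ becomes a walk in $G$ from $x_0$ to $x_d$ of length at most $s+1$. Therefore $d\le s+1$, giving $s\ge d-1$, so $d(H_1,H_d)\ge d-1$. Combining the two bounds yields $d(H_1,H_d)=d-1$.

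Finally, since \eqref{03eqn:connecting H_1 and H_d} is a walk of length $d-1$ equal to $d(H_1,H_d)$, it must be a shortest path; in particular its vertices $H_1,H_2,\dots,H_d$ are mutually distinct, for otherwise one could excise a segment and obtain a strictly shorter walk between $H_1$ and $H_d$. The only subtle point in the whole argument is the lifting step in the lower bound, and the care there lies in allowing $x_0=y_1$, $y_i=y_{i+1}$, or $y_s=x_d$ (so that what one actually gets is a walk of length \emph{at most} $s+1$), rather than inadvertently claiming all these vertices are distinct.
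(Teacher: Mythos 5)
Your proof is correct and follows essentially the same route as the paper: rule out $H_i=H_{i+1}$ via the chord $x_{i-1}\sim x_{i+1}$, then lift a walk in $\Gamma(G)$ back to a walk in $G$ by picking $y_i\in J_{i-1}\cap J_i$ to get $d\le s+1$. Your explicit care about possible coincidences ($x_0=y_1$, $y_i=y_{i+1}$, $y_s=x_d$) matches the paper's observation that the lifted walk has length $s-1$, $s$, or $s+1$.
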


\begin{proof}
For $1\le i\le d-1$ we have
$x_i\in H_i\cap H_{i+1}$ by definition,
and hence $H_i=H_{i+1}$ or $H_i\sim H_{i+1}$.
Suppose that $H_i=H_{i+1}$ occurs.
Then $x_{i-1},x_i, x_{i+1}\in H_i$ and these three vertices are
mutually distinct because 
\eqref{03eqn:connecting x0 and xd} is a shortest path.
Since $H_i$ is a clique, we have $x_{i-1}\sim x_{i+1}$,
which contradicts to that 
\eqref{03eqn:connecting x0 and xd} is a shortest path.
Thus we obtain a walk as in \eqref{03eqn:connecting H_1 and H_d}.

We next prove that \eqref{03eqn:connecting H_1 and H_d} gives
rise to a shortest path.
Let $s=d(H_1,H_d)$ and take a shortest path
\[
H_1=J_0\sim J_1\sim \dots \sim J_s=H_d\,.
\]
In that case we have
\begin{equation}\label{03eqn:in proof lemma 3.4}
s\le d-1.
\end{equation}
For $1\le i\le s$ we take $y_i\in J_{i-1}\cap J_i$.
Then $y_1\sim y_2\sim\dotsb\sim y_s$.
Moreover, since $x_0, y_1\in H_1=J_0$ we have
$x_0=y_1$ or $x_0\sim y_1$.
Similarly, $x_d=y_s$ or $x_d\sim y_s$.
Thus we obtain a walk connecting $x_0$ and $x_d$ whose length
is $s-1$, $s$ or $s+1$.
Hence $d=d(x_0,x_d)\le s+1$.
Combining \eqref{03eqn:in proof lemma 3.4} we obtain
$s=d-1$ and hence $d(H_1,H_d)=d-1$ as desired.
\end{proof}

\begin{proposition}\label{03prop:diam(CG)}
Let $G$ be a graph and $\Gamma(G)$ its clique graph.
Then
\begin{equation}\label{03eqn:diam(CG)}
\diam(G)-1\le \diam(\Gamma(G)).
\end{equation}
\end{proposition}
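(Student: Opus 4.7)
The plan is to reduce the inequality directly to Lemma~\ref{03lem:constructing a chain of maximal cliques}, which already does almost all of the work. The strategy is to start from a pair of vertices realizing the diameter of $G$, take any shortest path between them, and use the lemma to manufacture a walk of prescribed length in $\Gamma(G)$ between two maximal cliques sitting at the two ends of that path.

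More concretely, I would first set $d = \diam(G)$. If $d \le 1$ the inequality is trivial (either $G$ has a single vertex, or $G$ is complete and $\Gamma(G) = K_1$ gives $\diam(\Gamma(G)) = 0 \ge d-1$), so I may assume $d \ge 2$. Then I would choose vertices $x_0, x_d \in V$ with $d_G(x_0, x_d) = d$ together with a shortest path
\[
x_0 \sim x_1 \sim \dotsb \sim x_d\,,
\]
and, for each $1 \le i \le d$, a maximal clique $H_i$ containing $\{x_{i-1}, x_i\}$.

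Now I would simply invoke Lemma~\ref{03lem:constructing a chain of maximal cliques}, which yields a walk $H_1 \sim H_2 \sim \dotsb \sim H_d$ in $\Gamma(G)$ with $d_{\Gamma(G)}(H_1, H_d) = d-1$. Since $\diam(\Gamma(G))$ is the maximum of all such pairwise distances, it is at least $d-1 = \diam(G)-1$, which is exactly the desired inequality \eqref{03eqn:diam(CG)}.

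I do not expect any real obstacle here, as the content sits entirely in Lemma~\ref{03lem:constructing a chain of maximal cliques}; the present proposition is essentially a repackaging of that lemma. The only mild care is to ensure the choice of shortest path realizes the diameter and to dispose of the degenerate small-diameter cases, both of which are routine.
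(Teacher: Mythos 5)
Your proposal is correct and follows essentially the same route as the paper: take a diameter-realizing shortest path in $G$, assign a maximal clique to each edge, and apply Lemma~\ref{03lem:constructing a chain of maximal cliques} to get $d_{\Gamma(G)}(H_1,H_d)=d-1\le\diam(\Gamma(G))$. The only cosmetic difference is that the paper invokes the lemma already for $d\ge1$ rather than splitting off the case $d\le1$ separately.
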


\begin{proof}
It is sufficient to show the assertion
for a graph $G$ with $d=\diam(G)\ge1$.
We take a shortest path
$x_0\sim x_1\sim \dotsb \sim x_d$ such that $d(x_0,x_d)=d$.
Define a sequence of maximal cliques $H_1,\dots, H_d$
as in Lemma \ref{03lem:constructing a chain of maximal cliques}.
Then we have
\[
d-1 =d(H_1,H_d)\le \diam (\Gamma(G)),
\]
which completes the proof of \eqref{03eqn:diam(CG)}.
\end{proof}

\begin{lemma}\label{03lem:constructing a chain of vertices}
For $d\ge1$ let 
\begin{equation}\label{03eqn:connecting H_0 and H_d}
H_0\sim H_1\sim \dotsb \sim H_d
\end{equation}
be a shortest path connecting $H_0$ and $H_d$,
that is $d(H_0,H_d)=d$.
For $1\le i\le d$ take a vertex $x_i\in H_{i-1}\cap H_i$.
Then
\begin{equation}\label{03eqn:path connecting x1 and xd}
x_1\sim x_2\sim \dotsb \sim x_d
\end{equation}
and $d(x_1,x_d)=d-1$.
Hence \eqref{03eqn:path connecting x1 and xd} is a shortest path
connecting $x_1$ and $x_d$,
and $x_1,x_2, \dots, x_d$ are mutually distinct. 
\end{lemma}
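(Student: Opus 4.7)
The plan is to mimic closely the structure of the proof of Lemma \ref{03lem:constructing a chain of maximal cliques}, which is dual to the present statement (exchanging the roles of vertices and maximal cliques).

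First I would verify that \eqref{03eqn:path connecting x1 and xd} is indeed a walk. For $1\le i\le d-1$ both $x_i$ and $x_{i+1}$ lie in the clique $H_i$, so either $x_i=x_{i+1}$ or $x_i\sim x_{i+1}$. The equality $x_i=x_{i+1}$ would force $x_i\in H_{i-1}\cap H_{i+1}$, hence $H_{i-1}=H_{i+1}$ or $H_{i-1}\sim H_{i+1}$; either contradicts the fact that \eqref{03eqn:connecting H_0 and H_d} is a shortest path (the three cliques $H_{i-1},H_i,H_{i+1}$ must be pairwise distinct, and a direct edge $H_{i-1}\sim H_{i+1}$ would shorten the path). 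Thus $x_i\sim x_{i+1}$, which also gives the bound $d(x_1,x_d)\le d-1$.

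For the reverse inequality, set $s=d(x_1,x_d)$ and take a shortest path $x_1=y_0\sim y_1\sim\dotsb\sim y_s=x_d$ in $G$. For $1\le i\le s$ choose a maximal clique $J_i$ containing $\{y_{i-1},y_i\}$. Lemma \ref{03lem:constructing a chain of maximal cliques} then yields $J_1\sim J_2\sim\dotsb\sim J_s$ with $d(J_1,J_s)=s-1$. Now I extend this chain at both ends back to $H_0$ and $H_d$: since $x_1=y_0\in H_0\cap J_1$, we have $H_0=J_1$ or $H_0\sim J_1$, and similarly $x_d=y_s\in H_d\cap J_s$ gives $H_d=J_s$ or $H_d\sim J_s$. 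Concatenating these at most two edges with the shortest path from $J_1$ to $J_s$ produces a walk from $H_0$ to $H_d$ in $\Gamma(G)$ of length at most $(s-1)+2=s+1$. Hence $d=d(H_0,H_d)\le s+1$, i.e.\ $s\ge d-1$, and we conclude $d(x_1,x_d)=d-1$. Since the walk \eqref{03eqn:path connecting x1 and xd} has length $d-1$ and realizes this distance, it is a shortest path and its vertices are mutually distinct (for $d=1$ the conclusion is vacuous).

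The main technical point will be the endpoint bookkeeping: one must carefully check that $x_1\in H_0$ (which is immediate from $x_1\in H_0\cap H_1$) and $x_d\in H_d$ (immediate from $x_d\in H_{d-1}\cap H_d$), so that the auxiliary chain $J_1,\dots,J_s$ can legitimately be linked to $H_0$ on one side and to $H_d$ on the other. Once this is handled, everything reduces to applying Lemma \ref{03lem:constructing a chain of maximal cliques} and combining the two chain-length estimates.
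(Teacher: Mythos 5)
Your proposal is correct and follows essentially the same route as the paper's own proof: establish the walk by ruling out $x_i=x_{i+1}$ via the shortest-path property of the clique chain, then bound $d(x_1,x_d)$ from below by lifting a shortest vertex path to a clique chain through Lemma \ref{03lem:constructing a chain of maximal cliques} and linking its ends to $H_0$ and $H_d$. The endpoint bookkeeping you flag is handled exactly as you describe, so no changes are needed.
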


\begin{proof}
For $1\le i \le d-1$ we have $x_i, x_{i+1}\in H_i$
and hence $x_i=x_{i+1}$ or $x_i\sim x_{i+1}$.
Suppose that $x_i=x_{i+1}$ occurs.
Then $x_i=x_{i+1}\in H_{i-1}\cap H_i\cap H_{i+1}$
and hence $H_{i-1}\cap H_{i+1}\neq\emptyset$,
from which we obtain $H_{i-1}=H_{i+1}$ or $H_{i-1}\sim H_{i+1}$.
In any case we come to a contradiction because
\eqref{03eqn:connecting H_0 and H_d} is a shortest path.
We have thus obtained a walk
as in \eqref{03eqn:path connecting x1 and xd}.

We next prove that \eqref{03eqn:path connecting x1 and xd}
gives rise to a shortest path.
We set $s=d(x_1,x_d)$ and take a shortest path
connecting $x_1$ and $x_d$, say,
\[
x_1=y_0\sim y_1\sim \dots \sim y_s=x_d.
\]
In that case we have
\begin{equation}\label{03eqn:in proof lemma 3.6}
s\le d-1.
\end{equation}
For $1\le i \le s$ let $J_i$ be a maximal clique
containing $\{y_{i-1}, y_i\}$.
It then follows from
Lemma \ref{03lem:constructing a chain of maximal cliques} that
$d(J_1,J_s)=s-1$.
Since $x_1=y_0\in H_0\cap J_1$, we have 
$H_0=J_1$ or $H_0\sim J_1$.
Similarly, we have $H_d=J_s$ or $H_d\sim J_s$.
Thus, we obtain a walk connecting $H_0$ and $H_d$ of which length
is $s-1$, $s$ or $s+1$.
Hence $d=d(H_0,H_d)\le s+1$.
Combining \eqref{03eqn:in proof lemma 3.6}, we obtain
$s=d-1$ and hence $d(x_1,x_d)=d-1$ as desired.
\end{proof}

\begin{proposition}\label{03prop:Clique graph is tree}
Let $G$ be a graph and $\Gamma(G)$ its clique graph.
If $\Gamma(G)$ is a tree, we have
\begin{equation}\label{03eqn:diam(CG) for tree}
\diam(G)-1=\diam(\Gamma(G)).
\end{equation}
\end{proposition}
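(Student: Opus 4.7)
By Proposition~\ref{03prop:diam(CG)} we already have $\diam(G) - 1 \le \diam(\Gamma(G))$, so the task reduces to establishing the reverse inequality under the tree hypothesis. I would set $d = \diam(\Gamma(G))$ and fix a shortest path $H_0 \sim H_1 \sim \dotsb \sim H_d$ in $\Gamma(G)$; the plan is to exhibit two vertices of $G$ whose graph distance equals $d + 1$.

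My first step is to extract the tree hypothesis as a disjointness statement: for every $j \ge 2$ we must have $H_0 \cap H_j = \emptyset$, since otherwise the edge $\{H_0, H_j\}$ would close a cycle $H_0 \sim H_1 \sim \dotsb \sim H_j \sim H_0$ in the tree; symmetrically $H_d \cap H_j = \emptyset$ for $j \le d - 2$. By Lemma~\ref{03lem:two maximal cliques} I can then pick $x_0 \in H_0 \setminus H_1$ and $x_{d+1} \in H_d \setminus H_{d-1}$, and by the preceding observation $x_0$ lies in no $H_j$ with $j \ge 1$, while $x_{d+1}$ lies in no $H_j$ with $j \le d - 1$. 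Applying Lemma~\ref{03lem:constructing a chain of vertices} supplies $x_i \in H_{i-1} \cap H_i$ for $1 \le i \le d$; each consecutive pair lies in a common clique and is distinct (using, in particular, that $x_0 \notin H_1$ and $x_{d+1} \notin H_{d-1}$), so $x_0 \sim x_1 \sim \dotsb \sim x_{d+1}$ is a walk of length $d + 1$, whence $d_G(x_0, x_{d+1}) \le d + 1$.

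The heart of the argument, and the main obstacle, is the reverse bound $d_G(x_0, x_{d+1}) \ge d + 1$. I would argue by contradiction: assuming a shortest walk $x_0 = z_0 \sim \dotsb \sim z_s = x_{d+1}$ has length $s \le d$, Lemma~\ref{03lem:constructing a chain of maximal cliques} yields maximal cliques $J_1, \dots, J_s$, with $J_i$ containing $\{z_{i-1}, z_i\}$, forming a shortest path $J_1 \sim \dotsb \sim J_s$ in $\Gamma(G)$ with $z_0 \in H_0 \cap J_1$ and $z_s \in H_d \cap J_s$. Concatenating $H_0$ and $H_d$ to the endpoints (when distinct from $J_1$ and $J_s$) produces a walk from $H_0$ to $H_d$ of length $s - 1$, $s$, or $s + 1$. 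Since $\Gamma(G)$ is a tree, any walk from $H_0$ to $H_d$ has length at least $d$, shares the parity of $d$, and attains length $d$ only by coinciding with the unique geodesic $H_0, H_1, \dotsc, H_d$. A short case analysis on whether $H_0$ equals or is merely adjacent to $J_1$, and likewise for $H_d$ and $J_s$, then shows that $s \le d$ either forces an impossible walk length less than $d$, or forces the concatenated walk to coincide with this geodesic; in the latter situation either $J_1 = H_1$ or $J_s = H_{d-1}$, giving $x_0 \in H_1$ or $x_{d+1} \in H_{d-1}$, each contradicting the defining property of $x_0$ or $x_{d+1}$. Hence $d_G(x_0, x_{d+1}) = d + 1$, so $\diam(G) \ge d + 1$, completing the proof.
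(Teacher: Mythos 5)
Your proof is correct and follows essentially the same route as the paper's: both construct the walk $x_0\sim x_1\sim\dotsb\sim x_{d+1}$ with $x_0\in H_0\setminus H_1$ and $x_{d+1}\in H_d\setminus H_{d-1}$, lift a shortest $G$-path joining $x_0$ and $x_{d+1}$ to a walk in $\Gamma(G)$ via Lemma~\ref{03lem:constructing a chain of maximal cliques}, and then exploit the tree hypothesis to pin down its length. The only difference is in that last step: the paper notes that the endpoints of a diametral path in a tree are pendant vertices, which forces $H_0=J_1$ and $H_d=J_s$ outright, whereas you argue via bipartiteness (parity of walk lengths) and uniqueness of geodesics in a tree; both correctly close the argument.
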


\begin{proof}
Set $d=\diam(\Gamma(G))$ and take a shortest path
\begin{equation}\label{03eqn:diameter path}
H_0\sim H_1\sim \dots \sim H_d,
\end{equation}
where $d(H_0,H_d)=d$.
For $1\le i \le d$ take $x_i\in H_{i-1}\cap H_i$.
By Lemma \ref{03lem:constructing a chain of vertices} we have
a shortest path
$x_1\sim x_2\sim \dotsb\sim x_d$.
Moreover, we take $x_0\in H_0\backslash H_1$ and
$x_{d+1}\in H_d\backslash H_{d-1}$.
Thus we obtain a walk
\begin{equation}\label{03eqn:diameter path x}
x_0\sim x_1\sim x_2\sim \dotsb\sim x_d\sim x_{d+1}
\end{equation}
whose length is $d+1$.

We shall prove that \eqref{03eqn:diameter path x} is a shortest path.
Set $s=d(x_0,x_{d+1})$ and take a shortest path, say,
\begin{equation}\label{03eqn:shortest path y}
x_0=y_0\sim y_1\sim y_2\sim \dotsb\sim y_s=x_{d+1}
\end{equation}
For $1\le i\le s$ let $J_i$ be 
a maximal clique containing $\{y_{i-1},y_i\}$.
By Lemma \ref{03eqn:connecting H_1 and H_d} we obtain 
a shortest path $J_1\sim \dotsb\sim J_s$,
namely,
\begin{equation}\label{03eqn:in proof prop 3.7}
d(J_1,J_s)=s-1.
\end{equation}
Now note that $x_0=y_0 \in H_0\cap J_1$.
Then we have $H_0=J_1$ or $H_0\sim J_1$.
Since $\Gamma(G)$ is a tree,
the ends of a diameter \eqref{03eqn:diameter path} are
pending vertices.
Hence $H_0\sim J_1$ implies that $J_1=H_1$.
In that case we have $x_0=y_0\in H_0\cap J_1=H_0\cap H_1$.
On the other hand,
we chose $x_0\in H_0\backslash H_1$, which is a contradiction.
Therefore, $H_0\sim J_1$ does not occur and we have $H_0=J_1$.
In a similar manner, we see that $H_d=J_s$.
Consequently, combining \eqref{03eqn:in proof prop 3.7} we come to
\[
d=d(H_0,H_d)=d(J_1,J_s)=s-1.
\]
Thus,
\[
\diam(\Gamma(G))=d=s-1=d(x_0,x_{d+1})-1\le \diam(G)-1.
\]
Finally, combining Proposition \ref{03prop:diam(CG)},
we obtain the equality \eqref{03eqn:diam(CG) for tree}.
\end{proof}

\section{Graphs with $\mathrm{QEC}(G)<-1/2$}

The complete bipartite graph $K_{1,3}$ is called a \textit{claw}.
The complete tripartite graph $K_{1,1,2}$,
which is also
obtained by deleting an edge from the complete graph $K_4$,
is called a \textit{diamond},
see Figure \ref{fig:Claw-Diamond}.
\begin{figure}[hbt]
\begin{center}
\includegraphics[width=240pt]{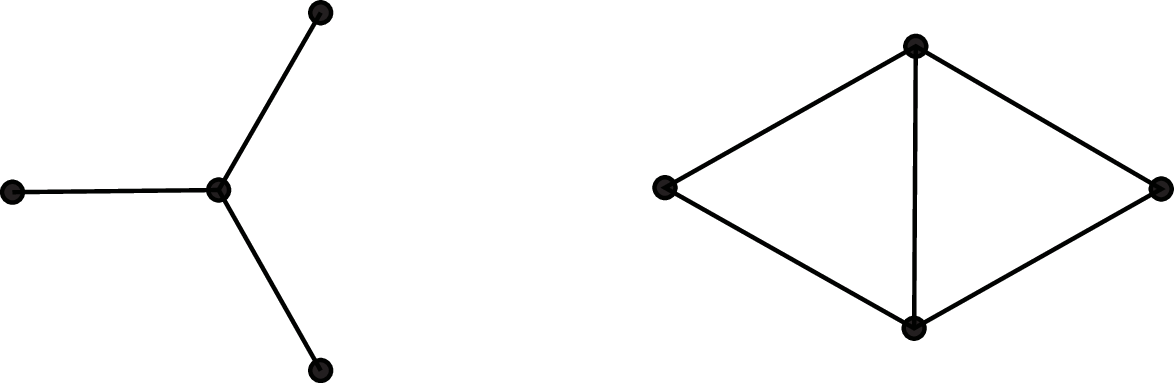}
\end{center}
\caption{Claw $K_{1,3}$ (left) and diamond $K_{1,1,2}$ (right)}
\label{fig:Claw-Diamond}
\end{figure}

It is essential to note that
\[
\mathrm{QEC}(K_{1,3})=\mathrm{QEC}(K_{1,1,2})=-\frac12\,.
\]
Since $\diam (K_{1,3})=\diam (K_{1,1,2})=2$,
we see from Propositions 
\ref{02prop:isometric embedding}
and \ref{02prop:isometrically embedded subgraphs} that
if a graph $G$ contains $K_{1,3}$ or $K_{1,1,2}$
as an induced subgraph,
we have $\mathrm{QEC}(G)\ge -1/2$.
Thus, we come to the following criterion.

\begin{proposition}[{\cite[Corollary 4.1]{Baskoro-Obata2021}}]
\label{03prop:forbidden subgraphs}
Any graph $G$ with $\mathrm{QEC}(G)< -1/2$ does not contain a
claw $K_{1,3}$ nor a diamond $K_{1,1,2}$ as an 
induced subgraph.
In short, the claw and diamond
are forbidden subgraphs for a graph with $\mathrm{QEC}(G)< -1/2$.
\end{proposition}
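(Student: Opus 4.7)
The proof is essentially the contrapositive of the chain of implications sketched in the paragraph preceding the statement, so the plan is just to organize that chain cleanly.

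First I would argue by contradiction: suppose $G$ is a graph with $\mathrm{QEC}(G) < -1/2$ that nevertheless contains an induced subgraph $H$ isomorphic to either $K_{1,3}$ or $K_{1,1,2}$. The goal is then to produce the contradiction $\mathrm{QEC}(G) \ge -1/2$.

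Next, I would invoke Proposition \ref{02prop:isometric embedding}(2). Since $\diam(K_{1,3}) = 2$ and $\diam(K_{1,1,2}) = 2$, and $H$ is by assumption an induced subgraph, this proposition tells us that $H$ is isometrically embedded in $G$. Here the only thing to observe (and worth stating explicitly, though routine) is that each of the two candidate graphs actually has diameter exactly $2$: in the claw any two leaves are at distance $2$ through the center, and in the diamond the two vertices of degree $2$ are at distance $2$.

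Then I would apply Proposition \ref{02prop:isometrically embedded subgraphs} to conclude
\[
\mathrm{QEC}(H) \le \mathrm{QEC}(G).
\]
Combined with the values $\mathrm{QEC}(K_{1,3}) = \mathrm{QEC}(K_{1,1,2}) = -1/2$ recorded just above the statement, this gives $\mathrm{QEC}(G) \ge -1/2$, contradicting our hypothesis and completing the proof.

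I do not expect any real obstacle here; both key inputs (the two QEC values and the diameter-$2$ isometric embedding principle) are already in hand, so the proof is a short two-line deduction. The only slightly delicate point, not really a difficulty, is making sure the reader sees why $\diam(H) \le 2$ is legitimately applied — that is, that the induced-subgraph hypothesis is enough to trigger Proposition \ref{02prop:isometric embedding}(2), which is why I would mention the diameters of the claw and diamond explicitly rather than leave them implicit.
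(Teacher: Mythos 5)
Your argument is correct and is precisely the one the paper sketches in the paragraph preceding the statement: since $\diam(K_{1,3})=\diam(K_{1,1,2})=2$, an induced copy is isometrically embedded by Proposition \ref{02prop:isometric embedding}(2), so Proposition \ref{02prop:isometrically embedded subgraphs} together with $\mathrm{QEC}(K_{1,3})=\mathrm{QEC}(K_{1,1,2})=-1/2$ forces $\mathrm{QEC}(G)\ge -1/2$. No gaps; this matches the paper's reasoning exactly.
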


\begin{lemma}\label{04lem:two maximal cliques of G with <-1/2}
Let $G$ be a graph with $\mathrm{QEC}(G)< -1/2$.
If $H_1$ and $H_2$ are maximal cliques of $G$ with
$H_1\neq H_2$, then $H_1\cap H_2=\emptyset$ or $|H_1\cap H_2|=1$.
\end{lemma}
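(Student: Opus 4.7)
The plan is to argue by contradiction, aiming to produce an induced diamond $K_{1,1,2}$ inside $G$, which is forbidden by Proposition \ref{03prop:forbidden subgraphs}.

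First I would suppose, for contradiction, that two distinct maximal cliques $H_1\neq H_2$ of $G$ satisfy $|H_1\cap H_2|\ge 2$, and pick two distinct vertices $u,v\in H_1\cap H_2$. Since $u,v$ both belong to $H_1$ (and to $H_2$), which are cliques, we have $u\sim v$. Next I would invoke Lemma \ref{03lem:two maximal cliques} applied to $H_1$ and $H_2$: this produces vertices $a\in H_1\setminus H_2$ and $b\in H_2\setminus H_1$ with $a\not\sim b$. In particular $a,b,u,v$ are four pairwise distinct vertices.

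The key observation is now to compute the induced subgraph on $\{a,b,u,v\}$. Since $a,u,v\in H_1$ and $H_1$ is a clique, we get $a\sim u$ and $a\sim v$; similarly $b,u,v\in H_2$ gives $b\sim u$ and $b\sim v$. Together with $u\sim v$ and $a\not\sim b$, the five edges $\{u,v\},\{a,u\},\{a,v\},\{b,u\},\{b,v\}$ form exactly $K_4$ minus the edge $\{a,b\}$, which is the diamond $K_{1,1,2}$. Hence $G$ contains an induced diamond, contradicting Proposition \ref{03prop:forbidden subgraphs}, and the lemma follows.

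I do not anticipate a substantial obstacle: the only subtle point is to make sure the four vertices $a,b,u,v$ are distinct, but this is immediate since $a\notin H_2$ and $b\notin H_1$, whereas $u,v\in H_1\cap H_2$. No case analysis on the sizes of the cliques or further use of QE-specific machinery is needed beyond the forbidden-subgraph criterion already established.
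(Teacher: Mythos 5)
Your proof is correct and follows essentially the same route as the paper: assume $|H_1\cap H_2|\ge 2$, invoke Lemma \ref{03lem:two maximal cliques} to obtain non-adjacent $a\in H_1\setminus H_2$ and $b\in H_2\setminus H_1$, and observe that the four vertices induce a diamond, contradicting Proposition \ref{03prop:forbidden subgraphs}. Your additional checks of distinctness and of the exact edge set are just a more explicit writing of the same argument.
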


\begin{proof}
In order to prove the assertion by contradiction,
we suppose $|H_1\cap H_2|\ge2$ and take $x,y\in H_1\cap H_2$
with $x\neq y$.
By Lemma \ref{03lem:two maximal cliques} there exist
$a\in H_1\backslash H_2$ and 
$b\in H_2\backslash H_1$ such that $a\not\sim b$.
Then $\langle x,y,a,b\rangle$ forms a diamond,
which is a forbidden subgraph
as stated in Proposition \ref{03prop:forbidden subgraphs}.
\end{proof}

\begin{lemma}\label{03lem:three maximal cliques of G with <-1/2}
Let $G$ be a graph with $\mathrm{QEC}(G)< -1/2$.
If $H_1, H_2$ and $H_3$ are mutually distinct 
maximal cliques of $G$, 
then $H_1\cap H_2\cap H_3=\emptyset$.
\end{lemma}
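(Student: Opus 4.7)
The plan is to argue by contradiction. Suppose there is a vertex $x\in H_1\cap H_2\cap H_3$. Applying Lemma \ref{04lem:two maximal cliques of G with <-1/2} to each pair, we obtain $H_i\cap H_j=\{x\}$ for all $i\neq j$ (the intersection is nonempty because $x$ lies in it, and has cardinality at most one). The first thing I would establish is that every $H_i$ is strictly larger than $\{x\}$: since $G$ is connected and $|V|\ge2$, the vertex $x$ has some neighbor $y$, and then $\{x,y\}$ is a clique contained in a maximal clique; in particular, $|H_i|\ge 2$ for each $i$, so we may pick $v_i\in H_i\setminus\{x\}$. The three vertices $v_1,v_2,v_3$ are pairwise distinct because $v_i\in H_i$ and $H_i\cap H_j=\{x\}$ forces $v_j\notin H_i$ whenever $j\neq i$.

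Next I would analyze the induced subgraph $\langle \{x,v_1,v_2,v_3\}\rangle$. By construction $x\sim v_i$ for each $i$ (both lie in the clique $H_i$), so the question is which of the edges $\{v_i,v_j\}$ are present. I would split into two cases according to whether at least one such edge exists.

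In the first case, suppose no pair $v_i,v_j$ is adjacent. Then $\langle\{x,v_1,v_2,v_3\}\rangle$ is precisely the claw $K_{1,3}$ with center $x$, and since $\mathrm{diam}(K_{1,3})=2$ Proposition \ref{02prop:isometric embedding}(2) makes it an isometrically embedded induced subgraph; this contradicts Proposition \ref{03prop:forbidden subgraphs}. In the second case, say $v_1\sim v_2$ after relabelling. Then $\{x,v_1,v_2\}$ is a clique, so it is contained in some maximal clique $M$. But the edge $\{x,v_1\}$ is also contained in $H_1$, so $|M\cap H_1|\ge 2$. Lemma \ref{04lem:two maximal cliques of G with <-1/2} then forces $M=H_1$, whence $v_2\in H_1$; this contradicts $v_2\in H_2\setminus H_1$, since $H_1\cap H_2=\{x\}$ and $v_2\neq x$.

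The main obstacle is essentially only bookkeeping—making sure that the representatives $v_i$ can be chosen outside $\{x\}$ and are automatically distinct, and correctly identifying the maximal clique through the edge $\{x,v_1\}$ in the second case. The conceptual content is simply that Lemma \ref{04lem:two maximal cliques of G with <-1/2} says each edge lies in a unique maximal clique, which, combined with the claw-freeness from Proposition \ref{03prop:forbidden subgraphs}, rules out both possibilities.
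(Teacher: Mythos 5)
Your proof is correct, and in the case where two of the representatives are adjacent it takes a genuinely different route from the paper. The paper also reduces to the induced subgraph on $x$ together with one representative from each $H_i\setminus\{x\}$, but it chooses the representatives more carefully at the outset (using Lemma \ref{03lem:two maximal cliques} to get $a\in H_1\setminus H_2$, $b\in H_2\setminus H_1$ with $a\not\sim b$, and then $c\in H_3\setminus H_1$ with $c\not\sim a$) and, in the remaining case $b\sim c$, it manufactures a \emph{diamond} $\langle x,b,c,c^\prime\rangle$ by extracting a further vertex $c^\prime\in H_3\setminus H_2$ with $c^\prime\not\sim b$. You instead take arbitrary $v_i\in H_i\setminus\{x\}$ and, when some $v_1\sim v_2$, reuse Lemma \ref{04lem:two maximal cliques of G with <-1/2} to conclude that the maximal clique through the edge $\{x,v_1\}$ is unique and equals $H_1$, forcing $v_2\in H_1$ --- a contradiction that avoids the diamond entirely. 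Your version is arguably cleaner: it needs only the claw among the forbidden subgraphs plus the ``each edge lies in a unique maximal clique'' consequence of Lemma \ref{04lem:two maximal cliques of G with <-1/2}, and it dispenses with the careful initial choice of non-adjacent representatives. One small slip: your justification that $|H_i|\ge2$ (``$x$ has a neighbor $y$, so $\{x,y\}$ lies in a maximal clique'') does not by itself show that \emph{each} $H_i$ has a second vertex, since that maximal clique need not be $H_i$. The fact is nonetheless immediate: a maximal clique equal to $\{x\}$ would be properly contained in any other maximal clique through $x$, contradicting maximality; alternatively, Lemma \ref{03lem:two maximal cliques} gives $H_i\setminus H_j\neq\emptyset$, which together with $x\in H_i\cap H_j$ yields $|H_i|\ge2$. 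With that one-line repair the argument is complete.
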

 
\begin{proof}
In order to prove the assertion by contradiction
we suppose that $H_1\cap H_2\cap H_3\neq\emptyset$.
Then $H_1\cap H_2\neq\emptyset$ and 
by Lemma \ref{04lem:two maximal cliques of G with <-1/2} we have
$H_1\cap H_2=\{x\}$ for some $x\in V$.
Hence $H_2\cap H_3=H_3\cap H_1=H_1\cap H_2\cap H_3=\{x\}$.
On the other hand, by
Lemma \ref{03lem:two maximal cliques} 
there exist $a\in H_1\backslash H_2$
and $b\in H_2\backslash H_1$ such that $a\not\sim b$.
Note that there exists $c\in H_3\backslash H_1$ such that
$c\not\sim a$.
In fact, if any $c\in H_3\backslash H_1$ is adjacent to $a$,
then $H_3\cup\{a\}$ becomes a clique and we come to a contradiction.
Thus, we have chosen four vertices $x,a,b,c$.
There are two cases. 
In case of $b\not\sim c$, the induced subgraph
$\langle x,a,b,c\rangle$ becomes a claw.
In case of $b\sim c$ note that 
there exist $c^\prime\in H_3\backslash H_2$
such that $c^\prime\not\sim b$.
In fact,
if any $c^\prime\in H_3\backslash H_2$ is adjacent to
$b$, $H_3\cup\{b\}$ becomes a clique and we come to a contradiction.
Thus, taking $c^\prime\in H_3\backslash H_2$
such that $c^\prime\not\sim b$,
we see that $\langle x,b,c,c^\prime\rangle$ becomes a diamond.
In any case we obtain a forbidden subgraph 
as stated in Proposition \ref{03prop:forbidden subgraphs}
and arrive to a contradiction.
\end{proof}

\begin{proposition}\label{04prop:Gamma(G) is a tree}
For a graph $G=(V,E)$ with $\mathrm{QEC}(G)<-1/2$ the
clique graph $\Gamma(G)$ is a tree.
\end{proposition}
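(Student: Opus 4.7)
By Lemma~\ref{03lem:Gamma(G) is connected} the clique graph $\Gamma(G)$ is already connected, so it suffices to rule out cycles. Suppose for contradiction that $\Gamma(G)$ contains a cycle, and let $k\ge 3$ be the length of a shortest one, realised by distinct maximal cliques $H_0\sim H_1\sim\cdots\sim H_{k-1}\sim H_0$. Throughout the argument I use two consequences of the forbidden-subgraph hypothesis: Lemma~\ref{04lem:two maximal cliques of G with <-1/2} gives $H_{i-1}\cap H_i=\{x_i\}$, and Lemma~\ref{03lem:three maximal cliques of G with <-1/2} implies that every vertex of $G$ belongs to at most two maximal cliques; in particular each $x_i$ lies in exactly the pair $\{H_{i-1},H_i\}$.

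For the case $k=3$, the three vertices $x_0,x_1,x_2$ are pairwise distinct (a coincidence would place a single vertex in three distinct maximal cliques, violating Lemma~\ref{03lem:three maximal cliques of G with <-1/2}) and induce a triangle in $G$, as any two of them lie in a common $H_\ell$. Since $x_2\notin H_0$ and $H_0$ is maximal, there exists $y\in H_0$ with $y\not\sim x_2$, and $y\neq x_0,x_1$ because both are adjacent to $x_2$. Then $\{x_0,x_1,x_2,y\}$ induces the diamond $K_{1,1,2}$, contradicting Proposition~\ref{03prop:forbidden subgraphs}.

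For $k\ge 4$, my plan is to exhibit an isometrically embedded $C_k$ in $G$ and invoke Proposition~\ref{02prop:isometrically embedded subgraphs}. The $x_i$'s are again pairwise distinct by the same argument, and the induced subgraph on $\{x_0,\dots,x_{k-1}\}$ is exactly $C_k$: consecutive $x_i,x_{i+1}$ are adjacent in $H_i$, while an edge $x_i\sim x_j$ at cyclic distance $\ge 2$ would sit in some maximal clique $H^{*}\in\{H_{i-1},H_i\}\cap\{H_{j-1},H_j\}$; but these two pairs are disjoint because the four indices $i-1,i,j-1,j$ are distinct in a simple cycle of length $k\ge 4$. For the isometric embedding, any shorter $G$-geodesic $x_i=y_0\sim\cdots\sim y_{d'}=x_j$ yields a walk $M_1\sim\cdots\sim M_{d'}$ in $\Gamma(G)$, where $M_\ell$ is the unique maximal clique containing the edge $\{y_{\ell-1},y_\ell\}$ (uniqueness again by Lemma~\ref{04lem:two maximal cliques of G with <-1/2}); the endpoints satisfy $M_1\in\{H_{i-1},H_i\}$ and $M_{d'}\in\{H_{j-1},H_j\}$, so splicing this walk with the appropriate arc of the cycle produces a closed walk in $\Gamma(G)$ of total length strictly less than $k$, hence a cycle shorter than $k$, contradicting minimality. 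Consequently $\mathrm{QEC}(G)\ge\mathrm{QEC}(C_k)>-1/2$, the latter inequality following from the circulant spectrum of the distance matrix of $C_k$ (and immediate from Proposition~\ref{02prop:isometric embedding} in the low-diameter cases $k=4,5$), which contradicts the hypothesis. The main obstacle I foresee is precisely this isometric-embedding bookkeeping, over the four sub-cases corresponding to the choices of $M_1$ and $M_{d'}$, together with confirming the uniform spectral lower bound $\mathrm{QEC}(C_k)>-1/2$ for $k\ge 6$.
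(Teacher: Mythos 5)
Your proposal follows essentially the same route as the paper: take a shortest cycle in $\Gamma(G)$, identify the singleton intersections $x_i$ via Lemma~\ref{04lem:two maximal cliques of G with <-1/2}, kill the case $k=3$ with a diamond, and for $k\ge4$ show the $x_i$ induce an isometrically embedded $C_k$ (ruling out chords and shortcuts by lifting geodesics to walks of maximal cliques and contradicting minimality of $k$), then invoke $\mathrm{QEC}(C_k)>-1/2$. The only differences are cosmetic --- you package Lemma~\ref{03lem:three maximal cliques of G with <-1/2} as ``every vertex lies in at most two maximal cliques,'' which slightly streamlines the distinctness and chord arguments, and you defer the same four-sub-case splicing bookkeeping that the paper also only carries out for one representative case.
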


\begin{proof}
Suppose that the clique graph $\Gamma(G)$ is not a tree
and take a smallest cycle, say,
\begin{equation}\label{04eqn:smallest cycle in clique graph}
H_1\sim H_2\sim\dotsb \sim H_k\sim H_1,
\qquad k\ge3,
\end{equation}
where $H_i$ is a maximal clique of $G$ and
$H_i\cap H_{i+1}\neq\emptyset$ for $1\le i\le k$
(understanding $H_{k+1}=H_1$).
It follows from 
Lemma \ref{04lem:two maximal cliques of G with <-1/2} that
there exists a unique vertex $x_i$ such that
$H_i\cap H_{i+1}=\{x_i\}$ for $1\le i\le k$.
Then, obviously
\[
x_1\sim x_2\sim \dots \sim x_k\sim x_1.
\]

(Case 1) $k=3$.
In that case $x_1,x_2,x_3$ are mutually distinct.
We note that $x_1, x_3\in H_1$ and $x_2\not\in H_1$.
If $H_1=\{x_1,x_3\}$, namely $H_1\backslash\{x_1, x_3\}=\emptyset$,
then $H_1\cup \{x_2\}$ becomes a clique containing $H_1$ properly
and we come to a contradiction.
Hence $H_1\backslash\{x_1, x_3\}\neq\emptyset$.
If any $y\in H_1\backslash\{x_1, x_3\}$ is adjacent to $x_2$, 
then $H_1\cup \{x_2\}$ becomes a clique containing $H_1$ properly
and we come to a contradiction again.
Therefore, there exists $y\in H_1\backslash\{x_1, x_3\}$ such that
$y\not\sim x_2$.
Thus, $\langle x_1,x_2,x_3,y\rangle$ becomes a diamond,
which is a forbidden subgraph by Proposition
\ref{03prop:forbidden subgraphs}.
Consequently,
$\Gamma(G)$ does not contain a cycle
\eqref{04eqn:smallest cycle in clique graph} with $k=3$.

(Case 2) $k\ge4$.
Using the assumption 
that \eqref{04eqn:smallest cycle in clique graph} is a smallest
cycle, one can show easily that
$x_1,\dots, x_k$ are mutually distinct.

We first prove that the induced subgraph
$C=\langle x_1,x_2,\dots, x_k\rangle$ becomes a cycle $C_k$.
In fact, if not, there exist $1\le i,j \le k$ such that
$i+1<j$ and $x_i\sim x_j$.
Let $J$ be a maximal clique containing $\{x_i,x_j\}$.
Then $x_i\in H_i\cap H_{i+1}\cap J$ 
and $x_j\in H_j\cap H_{j+1}\cap J$.
In view of Lemma \ref{03lem:three maximal cliques of G with <-1/2}
we obtain $J=H_i$ or $J=H_{i+1}$ from the former condition,
and similarly $J=H_j$ or $J=H_{j+1}$ from the latter. 
In any case we come to a contradiction against that
\eqref{04eqn:smallest cycle in clique graph}
is a smallest cycle.

We next show that the cycle
$C=\langle x_1,x_2,\dots, x_k\rangle\cong C_k$ 
is isometrically embedded in $G$.
Suppose otherwise.
Then there exist $1\le i,j \le k$ such that
\begin{equation}\label{06eqn:distance comparison with cycle}
d_G(x_i,x_j)<d_{C_k}(x_i, x_j),
\end{equation}
where the right-hand side is the distance in the cycle $C_k$.
Without loss of generality, we may assume that
$1\le i<j\le k$.
Then, \eqref{06eqn:distance comparison with cycle} becomes
\begin{equation}\label{06eqn:distance comparison with cycle (1)}
d_G(x_i,x_j)<\min\{j-i, k-(j-i)\}.
\end{equation}
For $j=i+1$ we have $x_i\sim x_j$ and 
\eqref{06eqn:distance comparison with cycle (1)} does not hold.
For $j=i+2$ it follows from
\eqref{06eqn:distance comparison with cycle (1)} that
$d_G(x_i,x_j)=1$
and come to a contradiction against the argument
in the previous paragraph.
Thus, it is sufficient to derive a contradiction
from \eqref{06eqn:distance comparison with cycle (1)}
for some $j\ge i+3$.
Now take a shortest path
\[
x_i\sim y_0\sim y_1\sim \dots\sim y_s=x_j,
\qquad s=d_G(x_i,x_j).
\]
For $1\le i\le s$ take a maximal clique $J_i$ such that
$\{y_{i-1},y_i\}\subset J_i$.
Since $x_i=y_0\in H_i\cap H_{i+1}\cap J_1$,
by Lemma \ref{03lem:three maximal cliques of G with <-1/2}
we have $J_1=H_i$ or $J_1=H_{i+1}$.
Similarly, we see from
$x_j=y_s\in H_j\cap H_{j+1}\cap J_s$
that $J_s=H_j$ or $J_s=H_{j+1}$.
Thus, the path
$J_1\sim J_2\sim \dots \sim J_s$,
which is a shortest path by
Lemma \ref{03lem:constructing a chain of maximal cliques},
gives rise to an alternative path
connecting two vertices of $C$,
and hence two cyclic walks.
We will consider these two cyclic walks in details.

Consider the case where
\begin{equation}\label{04eqn:case one of four}
J_1=H_i, \quad \text{and}\quad J_s=H_j.
\end{equation}
We obtain two cyclic walks:
\begin{equation}\label{06eqn:alternative cycle 1}
H_1\sim \dots\sim H_i=J_1\sim J_2\sim\dots\sim J_s
=H_j\sim H_{j+1}\sim \dots\sim H_k\sim H_1.
\end{equation}
and 
\begin{equation}\label{06eqn:alternative cycle 2}
H_i=J_1\sim J_2\sim\dots\sim J_s=H_j \sim
H_{j-1}\sim \dots \sim H_{i+1}\sim H_i\,.
\end{equation}
The length of these walks are 
$(i-1)+(s-1)+(k-j)+1=k+s+i-j$ and
$(s-1)+(j-i)=s+j-i-1$, respectively.
In view of \eqref{06eqn:distance comparison with cycle (1)}
we consider two cases.
First, in case of $s<\min\{j-i, k-(j-i)\}=j-i$ we have
$k+s+i-j<k$, that is,
the length of \eqref{06eqn:alternative cycle 1} is less than $k$.
This contradicts to the choice of $k$.
Second, in case of $s<\min\{j-i, k-(j-i)\}=k-(j-i)$ we have
$s+j-i-1<k-1$, that is,
the length of \eqref{06eqn:alternative cycle 1} is less than $k-1$.
This contradicts to the choice of $k$.
Thus \eqref{04eqn:case one of four} does not occur.

Other than \eqref{04eqn:case one of four}
there are three more cases.
In each of these cases,
in a similar manner as in the previous case,
we may find a cycle in $\Gamma(G)$ 
which is smaller than \eqref{04eqn:smallest cycle in clique graph},
and come to a contradiction.
As a result,
the cycle $C=\langle x_1,x_2,\dots, x_k\rangle\cong C_k$ 
is isometrically embedded in $G$.

We now recall that $\mathrm{QEC}(C_k)>-1/2$ for any $k\ge4$.
In fact, the exact value of $\mathrm{QEC}(C_k)$ is known
\cite{Obata-Zakiyyah2018}.
As a consequence of (Case 2),
we obtain $\mathrm{QEC}(G)\ge \mathrm{QEC}(C_k)>-1/2$
and come to a contradiction.
Hence, $\Gamma(G)$ does not contain a smallest cycle
\eqref{04eqn:smallest cycle in clique graph} with $k\ge 4$.
This completes the proof.
\end{proof}

Summing up the above results, we state the following

\begin{theorem}\label{04thm:necessary condition}
Let $G=(V,E)$ be a graph with $\mathrm{QEC}(G)<-1/2$.
Then the clique graph $\Gamma(G)$ is a tree.
Any pair of adjacent maximal cliques $H_1$ and $H_2$ intersect
with a single vertex, i.e., $|H_1\cap H_2|=1$.
Moreover, mutually distinct three 
maximal cliques $H_1, H_2$ and $H_3$ do not intersect, 
i.e., $H_1\cap H_2\cap H_3=\emptyset$.
\end{theorem}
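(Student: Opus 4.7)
The plan is to recognize that this theorem is essentially a consolidation statement: all three conclusions have been prepared as separate results in the preceding pages, and the proof amounts to citing them correctly. So I would not expect to introduce any new arguments; instead, I would carefully state how each clause follows.

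First, I would invoke Proposition \ref{04prop:Gamma(G) is a tree} verbatim to obtain that the clique graph $\Gamma(G)$ is a tree. Second, for the intersection of two adjacent maximal cliques, I would take $H_1, H_2$ with $H_1 \sim H_2$ in $\Gamma(G)$; by definition of the clique graph this means $H_1 \neq H_2$ and $H_1 \cap H_2 \neq \emptyset$, so Lemma \ref{04lem:two maximal cliques of G with <-1/2} (which allows only the two alternatives $H_1 \cap H_2 = \emptyset$ or $|H_1 \cap H_2| = 1$) immediately forces $|H_1 \cap H_2| = 1$. Third, the statement about three mutually distinct maximal cliques is exactly Lemma \ref{03lem:three maximal cliques of G with <-1/2}, so I would simply cite it.

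Because the theorem is a summary, there is no genuine obstacle left to overcome at this point. The substantive difficulty was discharged already: the claw/diamond exclusion from Proposition \ref{03prop:forbidden subgraphs}, the single-vertex intersection in Lemma \ref{04lem:two maximal cliques of G with <-1/2}, and above all the delicate case analysis in Proposition \ref{04prop:Gamma(G) is a tree} that rules out a $3$-cycle in $\Gamma(G)$ via a diamond and rules out a shortest $k$-cycle ($k \ge 4$) by showing the induced $C_k$ embeds isometrically and using $\mathrm{QEC}(C_k) > -1/2$. Thus I would keep the proof very short, consisting of three lines of citations, and let the previously established machinery do the work.
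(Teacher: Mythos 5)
Your proposal is correct and matches the paper exactly: the paper introduces this theorem with ``Summing up the above results'' and gives no separate proof, since it is precisely the consolidation of Proposition \ref{04prop:Gamma(G) is a tree}, Lemma \ref{04lem:two maximal cliques of G with <-1/2} (applied to adjacent cliques, which by definition of $\Gamma(G)$ have nonempty intersection), and Lemma \ref{03lem:three maximal cliques of G with <-1/2}. Nothing further is needed.
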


Thus, we say naturally that a graph $G=(V,E)$ with $\mathrm{QEC}(G)<-1/2$ 
is a block graph which admits
``cactus-like'' structure, see Figure \ref{fig:Cactus}.
On the other hand, a \textit{cactus} is defined to be a connected graph
in which no edge lies on more than one cycle.
This definition traces back to \cite{Harary-Uhlenbeck1953},
though there is ambiguity in the usage in literature.
Note that such a cactus is different from our ``cactus-like'' graph.
\begin{figure}[hbt]
\begin{center}
\includegraphics[width=220pt]{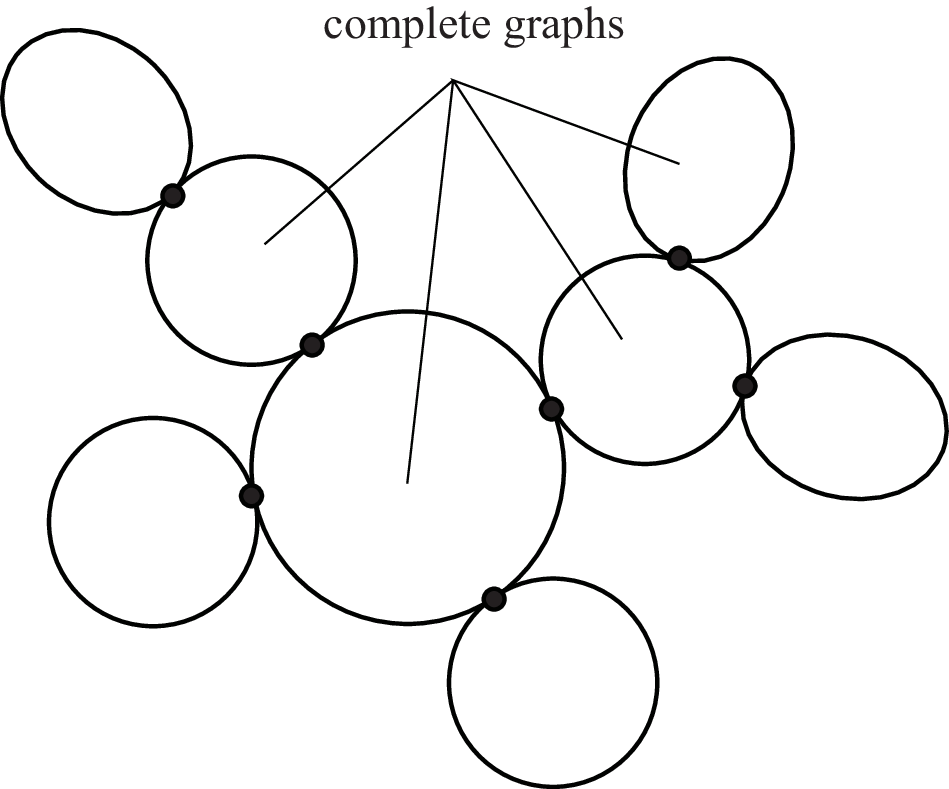}
\end{center}
\caption{``Cactus-like'' graph}
\label{fig:Cactus}
\end{figure}

\section{Graphs Consisting of Two Maximal Cliques}

For natural numbers $l\ge 1$, $m>l$ and $n>l$,
let $V=\{1,2,\dots,m+n-l\}$ and 
consider its two subsets:
\begin{equation}\label{05eqn:two cliques explicitly}
H_1=\{1,2,\dots,m\},
\qquad
H_2=\{m-l+1,m-l+2,\dots, m-l+n\}.
\end{equation}
Note that $V=H_1\cup H_2$.
Let $E$ be the set of two-element subsets 
$\{x,y\}\subset V$ satisfying $x,y\in H_1$ or
$x,y\in H_2$.
Then $G=(V,E)$ becomes a graph which is
denoted by $G=K_m\cup_l K_n$, see 
Figure \ref{fig:A connected graph consisting of two maximal cliques}.
Obviously, $G=K_m\cup_l K_n$ has exactly two maximal cliques
$H_1$ and $H_2$.
We will prove that any graph (recall that we
always assume that a graph is connected)
with exactly two maximal cliques is of this form.
\begin{figure}[hbt]
\begin{center}
\includegraphics[width=160pt]{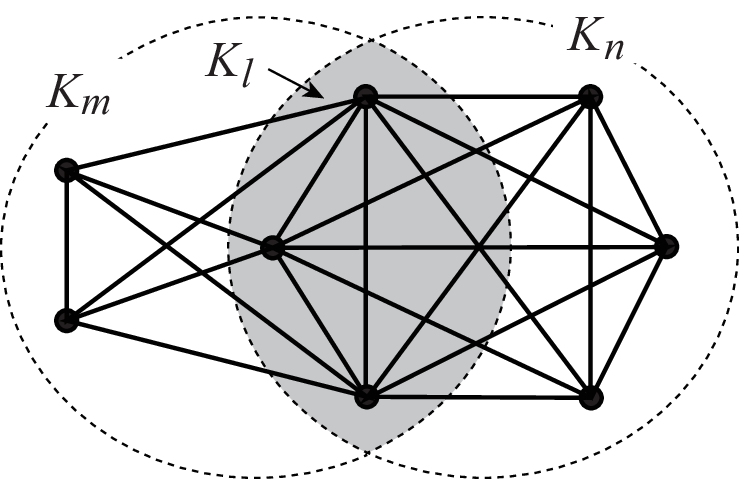}
\end{center}
\caption{A connected graph consisting of two maximal cliques}
\label{fig:A connected graph consisting of two maximal cliques}
\end{figure}

\begin{proposition}\label{05prop:two maximal cliques}
\label{05prop:A connected graph with two maximal cliques}
Let $G=(V,E)$ be a (connected) graph with exactly two maximal cliques.
Then there exist three natural numbers
$l\ge 1$, $m>l$ and $n>l$ such that $G\cong K_m\cup_l K_n$.
\end{proposition}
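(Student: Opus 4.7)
The strategy is to show directly that the two maximal cliques cover all of $V$ and that every edge is contained in one of them; the claimed isomorphism then follows by a straightforward relabelling of vertices.

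First I would establish that $V = H_1 \cup H_2$, where $H_1, H_2$ are the two maximal cliques of $G$. Every singleton $\{v\} \subset V$ is a clique, and so extends to some maximal clique of $G$; since there are only two, every vertex lies in $H_1$ or $H_2$. Next, using that $G$ is connected, Lemma \ref{03lem:Gamma(G) is connected} forces the clique graph $\Gamma(G)$ on two vertices to be connected, i.e., $H_1 \sim H_2$, which means $H_1 \cap H_2 \neq \emptyset$. Set $l = |H_1 \cap H_2|$, $m = |H_1|$, $n = |H_2|$; then $l \ge 1$, and Lemma \ref{03lem:two maximal cliques} yields $H_1 \setminus H_2 \neq \emptyset$ and $H_2 \setminus H_1 \neq \emptyset$, giving $m > l$ and $n > l$.

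Second I would verify the edge structure. One inclusion is immediate: any two distinct vertices of $H_1$ (resp.\ $H_2$) are adjacent since $H_1$ (resp.\ $H_2$) is a clique. For the reverse inclusion, suppose $\{x,y\} \in E$. Then $\{x,y\}$ is itself a clique, so it is contained in some maximal clique of $G$, hence in $H_1$ or $H_2$. Thus $E$ is exactly the set of two-element subsets of $V$ lying entirely in $H_1$ or entirely in $H_2$, matching the definition of $K_m \cup_l K_n$.

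Finally, I would exhibit the isomorphism by choosing any bijection $\varphi : V \to \{1,2,\dots,m+n-l\}$ that sends $H_1 \setminus H_2$ to $\{1,\dots,m-l\}$, the intersection $H_1 \cap H_2$ to $\{m-l+1,\dots,m\}$, and $H_2 \setminus H_1$ to $\{m+1,\dots,m+n-l\}$. Then $\varphi(H_1) = \{1,\dots,m\}$ and $\varphi(H_2) = \{m-l+1,\dots,m-l+n\}$, and the description of the edge set transfers verbatim, so $\varphi$ is a graph isomorphism $G \cong K_m \cup_l K_n$.

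There is really no hard step here: the argument is a careful unpacking of the definitions of ``maximal clique,'' combined with the two preparatory Lemmas \ref{03lem:two maximal cliques} and \ref{03lem:Gamma(G) is connected}. The only point requiring slight care is observing that \emph{every} edge of $G$ sits inside a maximal clique (so the edge set cannot be larger than the union of the edges of $H_1$ and $H_2$); once that is in hand, the isomorphism is forced.
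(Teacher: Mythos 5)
Your proof is correct and follows essentially the same route as the paper: both arguments establish that $H_1\cup H_2=V$, that $H_1\cap H_2\neq\emptyset$, and that no edge joins $H_1\setminus H_2$ to $H_2\setminus H_1$, and then read off the isomorphism. Your micro-arguments differ only cosmetically (you get the vertex cover from the fact that every singleton extends to a maximal clique, and the nonempty intersection from the connectedness of $\Gamma(G)$, whereas the paper's Lemma \ref{05lem:two maximal cliques} argues both directly from the connectedness of $G$), and all the facts you invoke are available in the paper.
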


Although Proposition \ref{05prop:two maximal cliques}
pertains to an elementary understanding of graph theory,
for later convenience we show an outline of the argument.

\begin{lemma}\label{05lem:two maximal cliques}
Let $G=(V,E)$ be a graph with exactly two maximal cliques
$H_1$ and $H_2$.
\begin{enumerate}
\item[\upshape (1)] For any $a\in H_1\backslash H_2$
and $b\in H_2\backslash H_1$ we have $a\not\sim b$.
\item[\upshape (2)] $V=H_1\cup H_2$.
\item[\upshape (3)] $H_1\cap H_2\neq \emptyset$.
\item[\upshape (4)] For any $a\in H_1\backslash H_2$
and $b\in H_2\backslash H_1$ we have $d_G(a,b)=2$.
\end{enumerate}
\end{lemma}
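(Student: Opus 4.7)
The plan is to prove the four items in order, since later items lean on earlier ones, and to exploit the strong rigidity imposed by having only two maximal cliques: every clique extends to either $H_1$ or $H_2$, so any additional edge or vertex must be absorbed into one of these.

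For part (1), I would argue by contradiction. Suppose $a\in H_1\setminus H_2$ and $b\in H_2\setminus H_1$ satisfy $a\sim b$. Then $\{a,b\}$ is a clique and hence extends to a maximal clique $J$; by hypothesis $J=H_1$ or $J=H_2$. If $J=H_1$ then $b\in H_1$, contradicting $b\in H_2\setminus H_1$; the other case is symmetric. Thus $a\not\sim b$.

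For part (2), observe that for any $v\in V$ the singleton $\{v\}$ is a clique, so it extends to a maximal clique, necessarily $H_1$ or $H_2$. Hence $v\in H_1\cup H_2$. Part (3) then follows from connectedness: suppose for contradiction that $H_1\cap H_2=\emptyset$, pick $a\in H_1$ and $b\in H_2$, and take a walk $a=v_0\sim v_1\sim\dotsb\sim v_s=b$ in $G$, which exists because $G$ is connected. Since $a\in H_1$ and $b\in H_2$ with $H_1\cap H_2=\emptyset$, there is a least index $i$ with $v_i\in H_2$; then $i\ge 1$ and $v_{i-1}\in H_1$ by part (2), with $v_{i-1}\notin H_2$ by minimality. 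Thus $v_{i-1}\in H_1\setminus H_2$ and $v_i\in H_2\setminus H_1$ are adjacent, contradicting part (1).

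Finally, for part (4), part (1) already gives $a\not\sim b$, so $d_G(a,b)\ge 2$. Using part (3) pick any $x\in H_1\cap H_2$; then $a,x\in H_1$ forces $a\sim x$ (note $a\ne x$ since $x\in H_2$ and $a\notin H_2$), and similarly $x\sim b$, producing a walk of length $2$ and hence $d_G(a,b)\le 2$. The main subtlety, such as it is, lies in part (3) where one must locate an offending edge along a connecting walk; once parts (1)--(3) are in place, part (4) is essentially a one-line sandwich argument.
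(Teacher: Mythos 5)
Your proof is correct and follows essentially the same route as the paper: extend $\{a,b\}$ to a maximal clique for (1), use connectedness and (1),(2) to force an intersection in (3), and route through a common vertex for (4). The only (minor, and in fact slightly cleaner) deviation is in (2), where you extend the singleton $\{v\}$ to a maximal clique, whereas the paper extends an edge at $v$ using connectedness; both are valid.
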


\begin{proof}
(1) Suppose that $a\in H_1\backslash H_2$
and $b\in H_2\backslash H_1$ are adjacent.
There exists a maximal clique containing $\{a,b\}$,
which is different from $H_1$ and $H_2$.
This implies that $a\not\sim b$ for 
any pair of $a\in H_1\backslash H_2$ and $b\in H_2\backslash H_1$ .

(2) In order to prove by contradiction
we suppose $V \neq H_1\cup H_2$.
Take $a\in V\backslash (H_1\cup H_2)$.
Since $G$ is connected, there exists $b\in V$ such that $a\sim b$.
Then a maximal clique containing $\{a,b\}$ exists
and is different from $H_1$ and from $H_2$.
We thus come to a contradiction.

(3) Suppose that $H_1\cap H_2= \emptyset$.
Take $a\in H_1$ and $b\in H_2$ arbitrarily.
Since $G$ is connected, there exists a walk connecting
$a$ and $b$.
Since this walk is kept in $H_1\cup H_2$ by (2),
we may find $a^\prime\in H_1$ and $b^\prime\in H_2$
such that $a^\prime \sim b^\prime$.
This contradicts to the result of (1).

(4) By (1) we know that $d_G(a,b)\ge2$.
On the other hand, taking $x\in H_1\cap H_2$ we
obtain a walk $a\sim x \sim b$, which implies that
$d_G(a,b)\le2$.
\end{proof}

\begin{proof}[Proof of Proposition \ref{05prop:two maximal cliques}]
Let $H_1$ and $H_2$ be the two maximal cliques of $G$.
We set $m=|H_1|$, $n=|H_2|$ and 
$l=|H_1\cap H_2|$.
By Lemma \ref{05lem:two maximal cliques} we see that
$H_1\cong K_m$, $H_2\cong K_n$ and $H_1\cap H_2\cong K_l$
with $l\ge1$ and $m,n>l$.
Moreover, there is no edge connecting vertices  
$a\in H_1\backslash H_2$ and $b\in H_2\backslash H_1$.
We conclude that $G\cong K_m\cup_l K_n$.
\end{proof}

The distance matrix of $G=K_m\cup_l K_n$
is easily written down
according to \eqref{05eqn:two cliques explicitly}.
In fact, taking Lemma \ref{05lem:two maximal cliques} (4)
into account, we obtain the distance matrix $D$ 
in a block-matrix form as follows:
\begin{equation}\label{05eqn:distance matrix}
D=\begin{bmatrix}
J-I & J & J \\
J & J-I & 2J \\
J & 2J  & J-I 
\end{bmatrix},
\end{equation}
where $I$ is the identity matrix and
$J$ the matrix whose entries are all one
(the sizes of these matrices are understood in the context).
Then $\mathrm{QEC}(G)$ is obtained by means of the
basic formula in Proposition \ref{02prop:QEC}.
The computation is just a routine and
is deferred to the Appendix.

\begin{theorem}\label{05thm:main formula for QEC}
For $l\ge 1$, $m>l$ and $n>l$ we have
\begin{equation}\label{05eqn:main QEC}
\mathrm{QEC}(K_m\cup_l K_n)
=-1+\frac{-(m-l)(n-l)+\sqrt{\mathstrut mn(m-l)(n-l)}}{m+n-l}
\end{equation}
\end{theorem}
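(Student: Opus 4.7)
The plan is to exploit the block symmetry of the distance matrix \eqref{05eqn:distance matrix} to reduce the Lagrange system of Proposition \ref{02prop:QEC} to a finite-dimensional eigenvalue problem. Partition $V$ into the three orbits $A=H_1\cap H_2$, $B=H_1\setminus H_2$, $C=H_2\setminus H_1$ of sizes $l$, $m-l$, $n-l$. Since the group $\mathrm{Sym}(l)\times\mathrm{Sym}(m-l)\times\mathrm{Sym}(n-l)$ commutes with $D$, the space $C(V)$ decomposes $D$-invariantly into the three-dimensional subspace $U$ of block-constant functions and its complement $W$ consisting of functions with mean zero on each block.

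I would first dispose of $W$. If $f\in W$ is supported on a single block with mean zero there, every off-diagonal $J$- or $2J$-block annihilates $f$ while the diagonal $J-I$ returns $-f$; hence $Df=-f$, so any stationary point lying in $W$ contributes only $\lambda=-1$. The substantive work lies in $U$. Parametrising $f=\alpha\1_A+\beta\1_B+\gamma\1_C$ and reading \eqref{05eqn:distance matrix} block by block, one finds
\begin{align*}
Df|_A &= S-\alpha,\\
Df|_B &= S+(n-l)\gamma-\beta,\\
Df|_C &= S+(m-l)\beta-\gamma,
\end{align*}
with $S:=l\alpha+(m-l)\beta+(n-l)\gamma$. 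Imposing the stationarity condition $Df=\lambda f+\nu\1$ together with $S=0$ and setting $k=\lambda+1$, the $A$-equation yields $\nu=-k\alpha$, and the remaining two equations reduce to
$$(n-l)\gamma=k(\beta-\alpha),\qquad (m-l)\beta=k(\gamma-\alpha).$$
Expressing $\alpha$ and $\beta$ in terms of $\gamma$ from these and feeding the result back into $S=0$ produces, after clearing denominators, the single quadratic
$$(m+n-l)k^2+2(m-l)(n-l)k-l(m-l)(n-l)=0.$$

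The decisive algebraic observation is the identity $(m-l)(n-l)+l(m+n-l)=mn$, which collapses the discriminant and delivers the roots
$$k=\frac{-(m-l)(n-l)\pm\sqrt{\mathstrut mn(m-l)(n-l)}}{m+n-l}.$$
Because $mn-(m-l)(n-l)=l(m+n-l)>0$, the larger root is strictly positive, so the corresponding $\lambda=k-1$ exceeds $-1$ and dominates the candidate from $W$; by Proposition \ref{02prop:QEC} this is $\mathrm{QEC}(G)$, yielding \eqref{05eqn:main QEC}. The main obstacle is the bookkeeping required to bring the Lagrange system to a single quadratic in $k$; the clinching step is recognising the identity that reduces the discriminant to $mn(m-l)(n-l)$.
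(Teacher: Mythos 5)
Your argument is correct and is essentially the paper's own proof (see the Appendix): both reduce the Lagrange system of Proposition \ref{02prop:QEC} to block-constant functions, arrive at the same quadratic $(m+n-l)k^2+2(m-l)(n-l)k-l(m-l)(n-l)=0$ in $k=\lambda+1$, and use the identity $(m-l)(n-l)+l(m+n-l)=mn$ to collapse the discriminant. The only (cosmetic) difference is organizational: you split off the block-constant subspace by symmetry at the outset and eliminate $\alpha,\beta$ in favour of $\gamma$, which conveniently sidesteps the paper's case distinction on $\Delta=(\lambda+1)^2-(m-l)(n-l)$.
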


\begin{corollary}[{\cite[Proposition 4.4]{Baskoro-Obata2021}}]
\label{05cor:main formula for QEC with l=1}
Let $m\ge2$ and $n\ge2$.
Then $K_m\cup_1 K_n$ is a graph obtained from
$K_m$ and $K_n$ by concatenating a vertex,
in other words, it is the star product $K_m\cup_1 K_n=K_m*K_n$,
and we have
\begin{align}
\mathrm{QEC}(K_m\cup_1 K_n)
&=\mathrm{QEC}(K_m* K_n)
\nonumber \\
&=\frac{-mn+\sqrt{\mathstrut mn(m-1)(n-1)}}{m+n-1}
\nonumber \\
&=-\left(1+\sqrt{\bigg(1-\dfrac{1}{m}\bigg)
                 \bigg(1-\dfrac{1}{n}\bigg)}\,\right)^{-1}.
\label{05:eqn:formula for Km*Kn}
\end{align}
\end{corollary}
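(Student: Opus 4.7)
The plan is to derive this corollary as a direct specialization of Theorem \ref{05thm:main formula for QEC} with $l=1$, followed by routine algebraic simplification into the three equivalent closed forms listed.

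First, I would verify the graph-theoretic identification $K_m\cup_1 K_n = K_m*K_n$. By the construction in \eqref{05eqn:two cliques explicitly}, setting $l=1$ produces a graph in which the two maximal cliques $H_1\cong K_m$ and $H_2\cong K_n$ share exactly one vertex and no additional edges connect $H_1\setminus H_2$ to $H_2\setminus H_1$ (by Lemma \ref{05lem:two maximal cliques}(1)). This is precisely the star product: $K_m$ and $K_n$ glued at a single vertex.

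Next, I would substitute $l=1$ into \eqref{05eqn:main QEC}:
\[
\mathrm{QEC}(K_m\cup_1 K_n)
= -1 + \frac{-(m-1)(n-1) + \sqrt{mn(m-1)(n-1)}}{m+n-1}.
\]
Absorbing the $-1$ into the fraction, the numerator becomes $-(m+n-1)-(m-1)(n-1)+\sqrt{mn(m-1)(n-1)}$. Expanding $(m-1)(n-1)=mn-m-n+1$ causes the non-radical terms to collapse to $-mn$, yielding the first displayed form in \eqref{05:eqn:formula for Km*Kn}.

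For the third form, I would rationalize: multiplying numerator and denominator by the conjugate $mn+\sqrt{mn(m-1)(n-1)}$ turns the numerator into $-(m^2n^2 - mn(m-1)(n-1)) = -mn\bigl(mn-(m-1)(n-1)\bigr) = -mn(m+n-1)$. After cancelling the factor $m+n-1$, one obtains
\[
\mathrm{QEC}(K_m\cup_1 K_n) = \frac{-mn}{mn + \sqrt{mn(m-1)(n-1)}},
\]
and dividing top and bottom by $mn$, together with $\sqrt{mn(m-1)(n-1)}/(mn)=\sqrt{(1-1/m)(1-1/n)}$, produces the final form. Since every step past the invocation of Theorem \ref{05thm:main formula for QEC} is purely algebraic, there is no real obstacle; the only thing to watch is keeping track of signs during the rationalization step.
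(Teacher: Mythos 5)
Your proposal is correct and matches the paper's intended route: the corollary is stated as an immediate specialization of Theorem \ref{05thm:main formula for QEC} at $l=1$, and your algebraic simplifications (collapsing the non-radical terms to $-mn$ and rationalizing to reach the reciprocal form) are all verified. The parameter ranges also align, since $m>l$ and $n>l$ with $l=1$ is exactly $m,n\ge2$.
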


\begin{corollary}\label{05cor:main formula for QEC with l=2}
Let $m\ge3$ and $n\ge3$.
Then $K_m\cup_2 K_n$ is a graph obtained from
$K_m$ and $K_n$ by concatenating an edge
and we have
\begin{equation}\label{05eqn:main QEC with l=2}
\mathrm{QEC}(K_m\cup_2 K_n)
=\frac{-mn+m+n-2+\sqrt{\mathstrut mn(m-2)(n-2)}}{m+n-2}
\end{equation}
\end{corollary}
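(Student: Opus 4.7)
The plan is to obtain Corollary \ref{05cor:main formula for QEC with l=2} as a direct specialization of Theorem \ref{05thm:main formula for QEC} with $l=2$. First I would verify that the hypotheses are compatible: the theorem requires $l\ge 1$ together with $m>l$ and $n>l$, which at $l=2$ becomes $m\ge 3$ and $n\ge 3$, matching the hypothesis of the corollary exactly.

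Next I would justify the geometric description. From the construction in \eqref{05eqn:two cliques explicitly}, setting $l=2$ gives $H_1=\{1,\dots,m\}$ and $H_2=\{m-1,m,\dots,m+n-2\}$, so $H_1\cap H_2=\{m-1,m\}$, a pair of adjacent vertices. Hence $K_m\cup_2 K_n$ is indeed the graph obtained by gluing $K_m$ and $K_n$ along a common edge, as asserted.

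Finally, \eqref{05eqn:main QEC with l=2} follows by substituting $l=2$ into \eqref{05eqn:main QEC}: one brings the leading $-1$ under the common denominator $m+n-2$, then expands $(m-2)(n-2)=mn-2m-2n+4$ in the numerator; the terms arising from $-(m+n-2)-(m-2)(n-2)$ collapse cleanly to $-mn+m+n-2$, while the surd $\sqrt{mn(m-2)(n-2)}$ transfers unchanged. There is no genuine obstacle here, since Theorem \ref{05thm:main formula for QEC} has already carried out the substantive work of diagonalizing the block distance matrix \eqref{05eqn:distance matrix} and solving the Lagrangian system of Proposition \ref{02prop:QEC}; the corollary amounts to a convenient algebraic repackaging of that result into a form that will be useful later, for instance when comparing $\mathrm{QEC}(K_m\cup_2 K_n)$ against the threshold values $\mathrm{QEC}(P_d)$.
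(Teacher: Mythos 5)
Your proposal is correct and matches the paper's (implicit) treatment: the corollary is obtained simply by setting $l=2$ in Theorem \ref{05thm:main formula for QEC}, and your algebraic check that $-(m+n-2)-(m-2)(n-2)=-mn+m+n-2$ is exactly the required simplification. The identification of $H_1\cap H_2$ with an edge is likewise the intended reading of the construction \eqref{05eqn:two cliques explicitly}.
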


\begin{remark}\label{05rem:on Thm 5.3}
\normalfont
By changing parameters we obtain an alternative form of
\eqref{05eqn:main QEC} in Theorem \ref{05thm:main formula for QEC}.
For $l,m,n\ge1$ we have
\[
\mathrm{QEC}(K_{m+l}\cup_l K_{n+l})
=-1+\frac{l}{1+\sqrt{\bigg(1+\dfrac{l}{m}\bigg)\bigg(1+\dfrac{l}{n}\bigg)}}\,.
\]
This is useful to discuss estimates of $\mathrm{QEC}(K_{m+l}\cup_l K_{n+l})$.
\end{remark}

\section{Characterization of Graphs Along $QEC(P_d)$}

\begin{proposition}\label{05prop:QEC and diam Gamma(G)}
Let $d\ge3$.
If $\mathrm{QEC}(G)<\mathrm{QEC}(P_d)$,
we have
$\diam(G)\le d-2$ and $\diam(\Gamma(G))\le d-3$.
\end{proposition}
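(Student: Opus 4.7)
The plan is to split the inequality into two parts, proving first the bound on $\mathrm{diam}(G)$ directly from isometric embedding of a path, and then deducing the bound on $\mathrm{diam}(\Gamma(G))$ via the tree structure of the clique graph.

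First I would observe that by \eqref{01eqn:QEC(P_d)} we have $\mathrm{QEC}(P_d)<-1/2$ for every $d\ge 2$, so the hypothesis $\mathrm{QEC}(G)<\mathrm{QEC}(P_d)$ immediately forces $\mathrm{QEC}(G)<-1/2$. This unlocks both Proposition~\ref{04prop:Gamma(G) is a tree} (so $\Gamma(G)$ is a tree) and Theorem~\ref{04thm:necessary condition}, and in particular lets us apply Proposition~\ref{03prop:Clique graph is tree} once we control $\mathrm{diam}(G)$.

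For the first bound, I would argue by contradiction: suppose $\mathrm{diam}(G)\ge d-1$, and take a shortest path $x_0\sim x_1\sim\dotsb\sim x_{d-1}$ in $G$. The vertices are mutually distinct, so the induced subgraph on $\{x_0,\dots,x_{d-1}\}$ contains a copy of $P_d$; moreover, because the path is a shortest path in $G$, we have $d_G(x_i,x_j)=|i-j|=d_{P_d}(x_i,x_j)$, so this copy of $P_d$ is isometrically embedded in $G$. Here a small check is that the induced subgraph is exactly $P_d$ and not something larger: any chord $x_i\sim x_j$ with $|i-j|\ge 2$ would create a shortcut and contradict that the path is shortest. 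By Proposition~\ref{02prop:isometrically embedded subgraphs} this gives $\mathrm{QEC}(G)\ge\mathrm{QEC}(P_d)$, contradicting the hypothesis. Hence $\mathrm{diam}(G)\le d-2$.

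For the second bound, since $\mathrm{QEC}(G)<-1/2$, Proposition~\ref{04prop:Gamma(G) is a tree} says $\Gamma(G)$ is a tree, and then Proposition~\ref{03prop:Clique graph is tree} gives the exact identity $\mathrm{diam}(\Gamma(G))=\mathrm{diam}(G)-1\le d-3$, completing the proof. There is no real obstacle here; the only point that deserves care is the isometric embedding step, namely verifying that the vertices along a shortest path of length $d-1$ induce precisely $P_d$ and that the graph distance restricted to them coincides with the path distance. Everything else is a clean application of results already established in Sections~3 and~4.
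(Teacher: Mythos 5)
Your proposal is correct and follows essentially the same route as the paper: a contradiction argument showing that $\diam(G)\ge d-1$ would isometrically embed $P_d$ in $G$ (forcing $\mathrm{QEC}(P_d)\le\mathrm{QEC}(G)$ via Proposition~\ref{02prop:isometrically embedded subgraphs}), followed by the tree property of $\Gamma(G)$ and the identity $\diam(\Gamma(G))=\diam(G)-1$ from Proposition~\ref{03prop:Clique graph is tree}. The only difference is that you spell out the (correct) verification that a shortest path of length $d-1$ induces an isometric copy of $P_d$, which the paper simply asserts.
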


\begin{proof}
Suppose that $\diam(G)> d-2$.
Then $\diam(G)\ge d-1$ and $P_d$ is isometrically embedded in $G$.
By Proposition \ref{02prop:isometrically embedded subgraphs}
we obtain $\mathrm{QEC}(P_d)\le\mathrm{QEC}(G)$,
which contradicts to the assumption.
Therefore, if $\mathrm{QEC}(G)<\mathrm{QEC}(P_d)$,
we have $\diam(G)\le d-2$.
In that case,
since $\mathrm{QEC}(G)<\mathrm{QEC}(P_d)<-1/2$,
it follows from Theorem \ref{04thm:necessary condition}
that $\Gamma(G)$ is a tree.
We then see from Proposition \ref{03prop:Clique graph is tree}
that $\diam(\Gamma(G))=\diam(G)-1\le d-3$.
\end{proof}

\subsection{$\mathrm{QEC}(G)<\mathrm{QEC}(P_3)$}
\label{06subsec:G<P_3}

For a graph with $\mathrm{QEC}(G)<\mathrm{QEC}(P_3)$
we have $\diam(\Gamma(G))=0$,
which means that $G$ has just one maximal clique.
Hence $G=K_n$ with $n\ge2$.
Since $\mathrm{QEC}(K_n)=-1$, we have the following assertions
immediately.

\begin{proposition}[\cite{Baskoro-Obata2021}]
\label{06prop:QEC=-1}
For a graph $G$ we have $\mathrm{QEC}(G)=\mathrm{QEC}(P_2)=-1$
if and only if $G=K_n$ with $n\ge2$.
\end{proposition}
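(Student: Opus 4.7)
The plan is to prove the two directions separately, leaning entirely on the framework already developed. The forward direction is a one-line calculation, and the reverse direction is a clean corollary of Proposition~\ref{05prop:QEC and diam Gamma(G)}.

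For the forward direction ($G = K_n \Rightarrow \mathrm{QEC}(G) = -1$), I would note that the distance matrix of $K_n$ is $D = J - I$, where $J$ is the all-ones matrix and $I$ the identity. For any $f \in C(V)$ satisfying the constraints $\langle \bm{1}, f\rangle = 0$ and $\langle f, f\rangle = 1$, we have $\langle f, J f\rangle = \langle \bm{1}, f\rangle^2 = 0$, so
\[
\langle f, Df\rangle = \langle f, Jf\rangle - \langle f, f\rangle = -1.
\]
Since this value is constant over the entire feasible set, the maximum in \eqref{0eqn:def od QEC(G)} equals $-1$, and hence $\mathrm{QEC}(K_n) = -1$ for every $n \ge 2$.

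For the reverse direction, the key numerical observation is that $\mathrm{QEC}(P_3) = -(1+\cos(\pi/3))^{-1} = -2/3$ by \eqref{01eqn:QEC(P_d)}, which is strictly greater than $-1$. Therefore, if $\mathrm{QEC}(G) = -1$, then in particular $\mathrm{QEC}(G) < \mathrm{QEC}(P_3)$. Applying Proposition~\ref{05prop:QEC and diam Gamma(G)} with $d = 3$ gives $\diam(\Gamma(G)) \le 0$, meaning $\Gamma(G)$ is a single vertex. This forces $G$ to have exactly one maximal clique, which must coincide with the whole vertex set $V$, so $G = K_n$ with $n = |V| \ge 2$.

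There is no real obstacle here: all the structural content is already packaged in Proposition~\ref{05prop:QEC and diam Gamma(G)}, which in turn rests on Theorem~\ref{04thm:necessary condition} and Proposition~\ref{03prop:Clique graph is tree}. The present proposition is simply the cleanest consequence of the scaffolding built up in Sections 3--6.1, with only the elementary evaluation $\mathrm{QEC}(K_n) = -1$ requiring a direct check.
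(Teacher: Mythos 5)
Your proposal is correct and follows essentially the same route as the paper: Subsection~6.1 likewise applies Proposition~\ref{05prop:QEC and diam Gamma(G)} with $d=3$ to get $\diam(\Gamma(G))=0$, hence a single maximal clique and $G=K_n$, and invokes $\mathrm{QEC}(K_n)=-1$ for the converse. The only difference is that you spell out the elementary computation $\langle f,Df\rangle=\langle f,Jf\rangle-\langle f,f\rangle=-1$ explicitly, which the paper leaves as a known fact.
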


\begin{proposition}[\cite{Baskoro-Obata2021}]
There exists no graph $G$ such that
$\mathrm{QEC}(P_2)<\mathrm{QEC}(G)<\mathrm{QEC}(P_3)=-2/3$.
\end{proposition}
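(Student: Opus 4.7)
The plan is to reduce the claim to two facts already in the paper: the diameter bound of Proposition \ref{05prop:QEC and diam Gamma(G)} (applied at $d=3$) and the characterization of complete graphs as the only graphs with $\mathrm{QEC}=-1$ (Propositions \ref{02prop:QEC ge -1} and \ref{06prop:QEC=-1}). The argument will be by contradiction: assume such a $G$ exists, squeeze $\diam(G)$ down to $1$, conclude $G$ is complete, and derive a contradiction with the strict lower bound.

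Suppose for contradiction that $\mathrm{QEC}(P_2)<\mathrm{QEC}(G)<\mathrm{QEC}(P_3)=-2/3$. Since $-2/3<-1/2$, Proposition \ref{05prop:QEC and diam Gamma(G)} applies with $d=3$ and forces $\diam(G)\le 1$. Because $G$ is connected with at least two vertices, every pair of distinct vertices is adjacent, so $G\cong K_n$ for some $n\ge 2$. The preceding Proposition \ref{06prop:QEC=-1} (equivalently Proposition \ref{02prop:QEC ge -1}) then gives $\mathrm{QEC}(G)=-1=\mathrm{QEC}(P_2)$, which contradicts the strict inequality $\mathrm{QEC}(P_2)<\mathrm{QEC}(G)$.

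As a sanity check one can also bypass Proposition \ref{05prop:QEC and diam Gamma(G)} and argue directly: if $G$ is not complete, pick $x,y\in V$ with $x\not\sim y$; a shortest $x$-$y$ path then contains three consecutive vertices whose induced subgraph is isomorphic to $P_3$ and has diameter $2$. By Proposition \ref{02prop:isometric embedding}(2) this $P_3$ is isometrically embedded in $G$, so Proposition \ref{02prop:isometrically embedded subgraphs} gives $\mathrm{QEC}(P_3)\le \mathrm{QEC}(G)$, again contradicting the upper bound.

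There is essentially no technical obstacle, since the substantive content has already been packaged in the earlier propositions. The only subtle point to verify is that the hypothesis of Proposition \ref{05prop:QEC and diam Gamma(G)} is legitimately met at $d=3$, which holds because $\mathrm{QEC}(P_3)=-2/3<-1/2$, so Theorem \ref{04thm:necessary condition} (on which Proposition \ref{05prop:QEC and diam Gamma(G)} relies) is indeed applicable.
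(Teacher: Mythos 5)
Your proof is correct and follows essentially the same route as the paper: both deduce from Proposition \ref{05prop:QEC and diam Gamma(G)} at $d=3$ that $G$ must be complete, hence $\mathrm{QEC}(G)=-1$, contradicting the strict lower bound. (Minor remark: since you only use the bound $\diam(G)\le 1$, and not the clique-graph bound, the appeal to Theorem \ref{04thm:necessary condition} is not actually needed -- the diameter bound in that proposition rests only on the isometric embedding of $P_d$.)
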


\subsection{$\mathrm{QEC}(G)<\mathrm{QEC}(P_4)$}
\label{06subsec:G<P_4}

Let $G$ be a graph satisfying $\mathrm{QEC}(G)<\mathrm{QEC}(P_4)$.
It follows from Proposition \ref{05prop:QEC and diam Gamma(G)}
that $\diam(\Gamma(G))\le 1$,
that is, $\diam(\Gamma(G))= 0$ or $\diam(\Gamma(G))= 1$.
The case of $\diam(\Gamma(G))= 0$ is discussed
already in Subsection \ref{06subsec:G<P_3}.

In the case of $\diam(\Gamma(G))=1$,
the clique graph $\Gamma(G)$ consists of two vertices,
which means that $G$ has exactly two maximal cliques.
By Proposition \ref{05prop:two maximal cliques}
we obtain $G=K_m\cup_l K_n$ with $l\ge1$, $m>l$ and $n>l$.
On the other hand, since $\mathrm{QEC}(G)<-1/2$,
we see from Lemma \ref{04lem:two maximal cliques of G with <-1/2}
that $l=1$.
Thus, $G$ is necessarily of the form 
$G=K_m\cup_1 K_n=K_m*K_n$ with $m\ge n\ge2$.

With the help of the formula
in Corollary \ref{05cor:main formula for QEC with l=1}
we may easily determine $m\ge n\ge2$ such that
$\mathrm{QEC}(K_m*K_n)<\mathrm{QEC}(P_4)=-(2-\sqrt2)$.
As a result we obtain the following assertions.

\begin{proposition}[\cite{Baskoro-Obata2021}]
For a graph $G$ we have 
$\mathrm{QEC}(G)=\mathrm{QEC}(P_3)=-2/3$
if and only if $G=P_3=K_2*K_2$.
\end{proposition}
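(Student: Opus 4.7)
The backward implication is immediate: substituting $d=3$ into the closed form \eqref{01eqn:QEC(P_d)} gives $\mathrm{QEC}(P_3)=-(1+\cos(\pi/3))^{-1}=-2/3$, and the identification $P_3=K_2*K_2$ is obvious since $P_3$ consists of two edges sharing a vertex.

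For the forward direction, suppose $\mathrm{QEC}(G)=-2/3$. The plan is to reduce immediately to the two-maximal-clique case and then solve one algebraic equation. Since $-2/3\approx-0.667$ while $\mathrm{QEC}(P_4)=-(2-\sqrt{2})\approx-0.586$, we have $\mathrm{QEC}(G)<\mathrm{QEC}(P_4)$. The structural analysis carried out in Subsection \ref{06subsec:G<P_4} then forces $G$ to be either a complete graph $K_n$ or a graph of the form $K_m*K_n$ with $m\ge n\ge2$. The first alternative is excluded by Proposition \ref{02prop:QEC ge -1}, which would give $\mathrm{QEC}(G)=-1\neq-2/3$. Hence $G\cong K_m*K_n$ for some $m\ge n\ge2$.

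The remaining step is to determine which pairs $(m,n)$ realize the value $-2/3$. Using the explicit formula \eqref{05:eqn:formula for Km*Kn} from Corollary \ref{05cor:main formula for QEC with l=1}, the equation $\mathrm{QEC}(K_m*K_n)=-2/3$ is equivalent, after clearing reciprocals, to
\[
\sqrt{\Bigl(1-\tfrac{1}{m}\Bigr)\Bigl(1-\tfrac{1}{n}\Bigr)}=\tfrac{1}{2},
\qquad\text{i.e.,}\qquad
\Bigl(1-\tfrac{1}{m}\Bigr)\Bigl(1-\tfrac{1}{n}\Bigr)=\tfrac{1}{4}.
\]
Since $1-1/k$ is a strictly increasing function of $k\ge 2$ taking minimum value $1/2$ at $k=2$, the left-hand side is at least $1/4$ for integers $m,n\ge 2$, with equality precisely when $m=n=2$. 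This uniquely identifies $G\cong K_2*K_2=P_3$, completing the proof.

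There is essentially no obstacle here; the statement is a clean consequence of the classification already developed in Subsection \ref{06subsec:G<P_4}, combined with the explicit formula for $\mathrm{QEC}(K_m*K_n)$. The only point requiring any care is ensuring the uniqueness assertion for the integer solution of $(1-1/m)(1-1/n)=1/4$, but monotonicity in each variable makes this immediate.
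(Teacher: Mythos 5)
Your proof is correct and follows essentially the same route the paper indicates: reduce via the $\mathrm{QEC}(G)<\mathrm{QEC}(P_4)$ structural analysis of Subsection \ref{06subsec:G<P_4} to $G=K_n$ (excluded since $\mathrm{QEC}(K_n)=-1$) or $G=K_m*K_n$, then apply the explicit formula \eqref{05:eqn:formula for Km*Kn}. Your monotonicity argument pinning down $m=n=2$ as the unique integer solution of $(1-1/m)(1-1/n)=1/4$ is exactly the ``easy determination'' the paper leaves to the reader.
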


\begin{proposition}[\cite{Baskoro-Obata2021}]
For a graph $G$ we have 
$\mathrm{QEC}(P_3)<\mathrm{QEC}(G)<\mathrm{QEC}(P_4)=-(2-\sqrt2)$
if and only if $G=K_m*K_2$ with $m\ge3$ or $G=K_3*K_3$.
\end{proposition}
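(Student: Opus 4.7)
The plan is to reduce the claim to a direct verification using the explicit formula of Corollary \ref{05cor:main formula for QEC with l=1}, followed by a short case analysis on the pair $(m,n)$.

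First, I would use the preceding discussion to narrow down the shape of $G$. Since $\mathrm{QEC}(G) < \mathrm{QEC}(P_4) < -1/2$, Proposition \ref{05prop:QEC and diam Gamma(G)} gives $\diam(\Gamma(G)) \le 1$. The case $\diam(\Gamma(G)) = 0$ means $G = K_n$, for which $\mathrm{QEC}(G) = -1 < \mathrm{QEC}(P_3)$, violating the lower strict inequality; so I can discard it. The case $\diam(\Gamma(G)) = 1$ has already been handled in the subsection to yield $G = K_m * K_n$ with $m \ge n \ge 2$ (via Proposition \ref{05prop:two maximal cliques} together with Lemma \ref{04lem:two maximal cliques of G with <-1/2}). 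The problem therefore reduces to identifying exactly those pairs $(m,n)$ with $m \ge n \ge 2$ for which $\mathrm{QEC}(P_3) < \mathrm{QEC}(K_m * K_n) < \mathrm{QEC}(P_4)$.

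Next, I would invoke the closed form of Corollary \ref{05cor:main formula for QEC with l=1}: setting $t(m,n) := (1-1/m)(1-1/n)$, we have $\mathrm{QEC}(K_m * K_n) = -(1 + \sqrt{t(m,n)}\,)^{-1}$, which is strictly decreasing in $t(m,n)$. A short direct computation shows that $\mathrm{QEC}(P_3) = -2/3$ corresponds to $t = 1/4$ (attained at $m=n=2$) and $\mathrm{QEC}(P_4) = -(2-\sqrt{2})$ corresponds to $t = 1/2$. Hence the two strict inequalities are equivalent to the single condition $1/4 < t(m,n) < 1/2$.

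Finally, a brief case split on $n$ would finish the proof. If $n = 2$, then $t(m,2) = (m-1)/(2m)$: this is always strictly less than $1/2$, and it exceeds $1/4$ exactly when $m \ge 3$, producing the family $K_m * K_2$ with $m \ge 3$. If $n \ge 3$, the lower bound is automatic since $t(m,n) \ge (2/3)^2 > 1/4$, while the upper bound $t(m,n) < 1/2$ combined with $m \ge n \ge 3$ forces $(m,n) = (3,3)$, because already $t(4,3) = 1/2$ and $t(m,n)$ is monotone nondecreasing in each of $m$ and $n$. This exhausts the possibilities and simultaneously gives the converse direction: each graph in the stated list has $t(m,n) \in (1/4,1/2)$. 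I do not foresee a genuine obstacle here; the only point requiring care is the bookkeeping of strict versus nonstrict inequalities at the two boundary values $t = 1/4$ and $t = 1/2$.
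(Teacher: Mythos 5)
Your proposal is correct and follows essentially the same route as the paper: reduce via $\diam(\Gamma(G))\le 1$ to $G=K_m*K_n$ with $m\ge n\ge 2$, then apply the closed formula of Corollary \ref{05cor:main formula for QEC with l=1} and check the finitely many cases. One small slip: $-(1+\sqrt{t}\,)^{-1}$ is strictly \emph{increasing} in $t$, not decreasing; your subsequent translation of the two inequalities into $1/4<t(m,n)<1/2$ already uses the correct (increasing) direction, so the rest of the argument and the case analysis stand as written.
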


\subsection{$\mathrm{QEC}(G)<\mathrm{QEC}(P_5)$}

If a graph $G$ satisfies $\mathrm{QEC}(G)<\mathrm{QEC}(P_5)
=-(5-\sqrt5)/5$,
then $\diam(\Gamma(G))\le 2$.
The case of $\diam(\Gamma(G))=0$ is already 
discussed in Subsection \ref{06subsec:G<P_3}.
If $\diam(\Gamma(G))= 1$,
we have $G=K_m*K_n$ with $m\ge n\ge 2$.
Then, as is discussed in Subsection \ref{06subsec:G<P_4},
we may employ the explicit formula for $\mathrm{QEC}(K_m*K_n)$ 
in Corollary \ref{05cor:main formula for QEC with l=1}.
The result will be stated in Propositions
\ref{06prop:P4=G} and \ref{06prop:diam=2 and P4<G<P5}.

Consider the case of $\diam(\Gamma(G))=2$.
Since $\Gamma(G)$ is a tree, it is 
necessarily a star $\Gamma(G)=K_{1,s}$ with $s\ge2$.
Then $G$ is a graph obtained as follows:
Let $n\ge s$ and $m_1\ge m_2\ge \dotsb \ge m_s\ge2$.
We choose $s$ vertices from $K_n$
and to each of the $s$ vertices we make 
a star product with $K_{m_1},\dots, K_{m_s}$,
see Figure \ref{fig:K_n(m_1,...m_s)}.
Such a graph is denoted by
$G=K_n*(K_{m_1},\dots, K_{m_s})$.
\begin{figure}[hbt]
\begin{center}
\includegraphics[width=180pt]{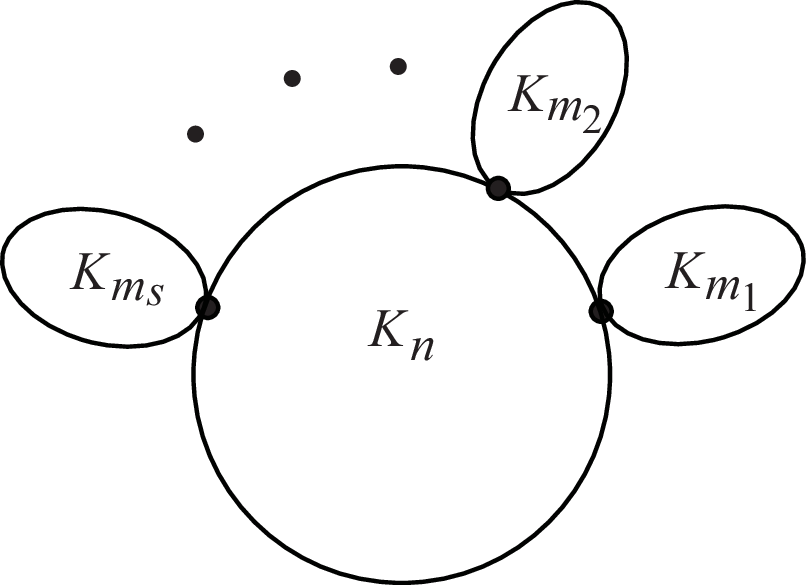}
\end{center}
\caption{$K_n*(K_{m_1},\dots, K_{m_s})$}
\label{fig:K_n(m_1,...m_s)}
\end{figure}

\begin{lemma}
Consider two graphs
$G=K_n*(K_{m_1},\dots, K_{m_s})$
with $n\ge s$, $m_1\ge m_2\ge \dotsb \ge m_s\ge2$,
and $G^\prime=K_{n^\prime}*(K_{m_1^\prime},\dots, K_{m_r^\prime})$
with $n^\prime\ge r$ and $m_1^\prime\ge m_2^\prime
\ge \dotsb \ge m_r^\prime\ge2$.
If $n^\prime\le n$, $r\le s$, $m_1^\prime\le m_1,
\dots, m_r^\prime \le m_r$,
then $G^\prime$ is isometrically embedded in $G$.
Hence $\mathrm{QEC}(G^\prime)\le \mathrm{QEC}(G)$.
\end{lemma}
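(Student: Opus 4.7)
The plan is to exhibit an explicit injective vertex map $\phi: V(G')\to V(G)$ whose image spans an induced subgraph of $G$ isomorphic to $G'$ and along which every pair of vertices has its $G'$-distance equal to its $G$-distance; Proposition \ref{02prop:isometrically embedded subgraphs} then yields $\mathrm{QEC}(G')\le \mathrm{QEC}(G)$ at once.

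To build $\phi$, I would fix concrete labels. Write $V(K_n)=\{v_1,\dots,v_n\}$ and let the $i$-th arm of $G$ be the complete graph on $\{v_i,w_{i,1},\dots,w_{i,m_i-1}\}$, attached to the central $K_n$ at $v_i$, for $1\le i\le s$. Label $G'$ analogously, with vertices $u_1,\dots,u_{n'}$ spanning its central $K_{n'}$ and $w'_{j,1},\dots,w'_{j,m_j'-1}$ forming the private part of the $j$-th arm, attached at $u_j$, for $1\le j\le r$. Then set $\phi(u_k)=v_k$ for $1\le k\le n'$, which is injective because $n'\le n$, and $\phi(w'_{j,\ell})=w_{j,\ell}$ for $1\le j\le r$ and $1\le \ell\le m'_j-1$, which is legitimate because $r\le s$ and $m'_j\le m_j$. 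Crucially, for every $j\le r$ the attachment vertex of the $j$-th arm of $G'$ is sent to $v_j$, which is itself an attachment vertex of $G$.

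Two verifications then remain. First, $\langle \phi(V(G'))\rangle\cong G'$: adjacency in $G$ occurs precisely within the central clique and within each arm, and $\phi$ respects those partitions, so adjacencies match exactly. Second, isometry: the inequality $d_G(\phi(x),\phi(y))\le d_{G'}(x,y)$ on the image is immediate because any walk inside the induced subgraph is a walk in $G$, while the reverse inequality reduces to the assertion that every shortest $G$-walk between two image vertices can be rerouted through vertices still in the image. Since all relevant distances in $G$ lie in $\{0,1,2,3\}$, this is a short case analysis keyed to which layers of $G'$ the two endpoints live in: same arm (distance $1$), one central and one in an arm (distance $1$ or $2$, the latter passing through the corresponding attachment vertex), both central (distance $1$), or two distinct arms (distance $3$, passing through two attachment vertices).

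The only real obstacle, though shallow, is the last sub-case: for two image vertices lying in distinct arms $j\ne j'$, the unique shortest $G$-walk of length $3$ between them runs $w'_{j,\ell}\to v_j\to v_{j'}\to w'_{j',\ell'}$, and closing the argument requires that both $v_j$ and $v_{j'}$ already belong to the image. This is precisely what the choice $\phi(u_j)=v_j$ (for all $j\le r$) at the outset secures. Once this is in place, isometric embedding is established and Proposition \ref{02prop:isometrically embedded subgraphs} completes the proof.
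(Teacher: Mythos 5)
Your proof is correct and is simply a careful write-up of the canonical embedding that the paper dismisses with the single word ``Obvious'': map the central clique into the central clique and each arm into the corresponding (larger) arm, then check adjacency and the distance values $1,2,3$ case by case. You rightly note that the distance-$3$ case between distinct arms is the one place where Proposition \ref{02prop:isometric embedding}(2) does not apply and a direct check is needed; your verification there is sound.
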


\begin{proof}
Obvious.
\end{proof}

\begin{lemma}\label{06lem:P4<Kn(m1...ms)}
Let $n\ge s\ge2$ and $m_1\ge m_2\ge \dots\ge m_s\ge2$.
If $m_1\ge 3$, we have
\[
\mathrm{QEC}(P_4)<\mathrm{QEC}(K_n*(K_{m_1},\dots, K_{m_s})).
\]
\end{lemma}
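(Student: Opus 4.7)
The plan is to reduce the statement, via the preceding monotonicity lemma, to a single smallest graph, and then to verify the strict inequality for that one graph.

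Under the hypotheses $n\ge s\ge 2$, $m_1\ge 3$, and $m_1\ge \dotsb\ge m_s\ge 2$, the smallest choice of parameters compatible with all constraints is $n=s=2$, $m_1=3$, $m_2=2$; write $G_0 = K_2*(K_3,K_2)$ for the resulting $5$-vertex graph. By the preceding lemma, $G_0$ is isometrically embedded in every $K_n*(K_{m_1},\dots,K_{m_s})$ satisfying the hypotheses, and hence by Proposition \ref{02prop:isometrically embedded subgraphs} we have $\mathrm{QEC}(G_0)\le\mathrm{QEC}(K_n*(K_{m_1},\dots,K_{m_s}))$. It therefore suffices to establish the single inequality
\[
\mathrm{QEC}(G_0)>\mathrm{QEC}(P_4)=-(2-\sqrt{2}).
\]

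For this minimal case I would apply the Lagrange multiplier method of Proposition \ref{02prop:QEC} directly. Label the central edge $\{a,b\}$, the two leaves of the triangle attached at $a$ by $c_1,c_2$, and the pendant vertex attached to $b$ by $d$, and write down the $5\times 5$ distance matrix. The involutive automorphism $c_1\leftrightarrow c_2$ decomposes the problem into symmetric and antisymmetric parts: the antisymmetric part contributes only the value $\lambda=-1$, while in the symmetric subspace I set $f(c_1)=f(c_2)=\alpha$, $f(a)=\beta$, $f(b)=\gamma$, $f(d)=\delta$, with constraints $2\alpha+\beta+\gamma+\delta=0$ and $2\alpha^2+\beta^2+\gamma^2+\delta^2=1$. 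Eliminating the multiplier $\mu$ from the four Lagrange equations by pairwise subtraction, and using the linear constraint to eliminate $\delta$, one arrives at a cubic equation in $\lambda$ that factors as
\[
(\lambda+1)(5\lambda^2+24\lambda+12)=0,
\]
whose largest root $\lambda_+=(-12+2\sqrt{21})/5$ gives $\mathrm{QEC}(G_0)$.

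The remaining inequality $\lambda_+ > -(2-\sqrt{2})$ is equivalent to $2\sqrt{21}>5\sqrt{2}+2$; both sides being positive, squaring reduces it to $84>54+20\sqrt{2}$, that is, $3/2>\sqrt{2}$, which is obvious. The main obstacle will be the algebraic elimination leading to the factored cubic, since manipulating four Lagrange equations by hand is error-prone. A cleaner alternative that sidesteps the explicit computation of $\mathrm{QEC}(G_0)$ is to take the $\mathrm{QEC}(P_4)$-maximizer supported on the isometric path $c_1\sim a\sim b\sim d$, extend it by $0$ at $c_2$, and perturb in the direction $\mathbf{1}_{c_2}-\mathbf{1}_{c_1}$; a first-order calculation shows that the Rayleigh quotient strictly increases, which yields the strict inequality directly.
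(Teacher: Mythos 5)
Your proposal is correct and takes essentially the same route as the paper: both reduce the statement, via the monotonicity lemma, to the single graph $K_2*(K_3,K_2)$ isometrically embedded in $K_n*(K_{m_1},\dots,K_{m_s})$, and then compare $\mathrm{QEC}(K_2*(K_3,K_2))=-2(6-\sqrt{21})/5$ with $\mathrm{QEC}(P_4)=-(2-\sqrt2)$. The only difference is that the paper simply cites this value from the table in \cite{Obata-Zakiyyah2018} (graph No.~5-7) instead of recomputing it; your Lagrange computation yields the same value, so the two arguments coincide.
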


\begin{proof}
It is known that
\[
\mathrm{QEC}(K_2*(K_3,K_2))
=-\frac{2(6-\sqrt{21})}{5}\approx -0.5669,
\]
see \cite{Obata-Zakiyyah2018},
where $K_2*(K_3,K_2)$ is referred to as No.5-7.
Then we have
\[
\mathrm{QEC}(P_4)
<\mathrm{QEC}(K_2*(K_3,K_2))
<\mathrm{QEC}(P_5).
\]
On the other hand, for $n\ge s\ge2$ and $m_1\ge3$,
$K_n*(K_{m_1},\dots, K_{m_s})$ contains
$K_2*(K_3,K_2)$ as an isometrically embedded subgraph.
Hence
\[
\mathrm{QEC}(P_4)
<\mathrm{QEC}(K_2*(K_3,K_2))
\le\mathrm{QEC}(K_n*(K_{m_1},\dots, K_{m_s})),
\]
as desired.
\end{proof}

\begin{lemma}[{\cite[Theorem 4.3]{Baskoro-Obata2021}}]
\label{06lem:QEC(Kn(K2...K2)}
For $n\ge s\ge2$ we have
\[
\mathrm{QEC}(K_n*(\overbrace{K_2,\dots,K_2}^{\text{$s$ times}}))
=-(2-\sqrt2)=\mathrm{QEC}(P_4).
\]
\end{lemma}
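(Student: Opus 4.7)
The plan is to apply the Lagrange multiplier criterion of Proposition~\ref{02prop:QEC} directly. Label the vertices of $G=K_n*(K_2,\ldots,K_2)$ so that $v_1,\ldots,v_n$ form the central $K_n$ and $w_i$ is the pendant attached to $v_i$ for $i=1,\ldots,s$. For $f\in C(V)$ write $a_i=f(v_i)$ for $i\le s$, $b_j=f(v_{s+j})$ for $j\le n-s$, $c_i=f(w_i)$, and $A,B,C$ for the respective coordinate sums. Using $d_G(v_i,w_j)=2$ for $i\ne j$, $d_G(w_i,w_j)=3$ for $i\ne j$, and the constraint $A+B+C=0$, a direct computation of $(Df)_u$ at each vertex turns the stationary-point equation $Df=\lambda f+(\mu/2)\1$ into the three families
\begin{align*}
(\lambda+1)a_i+c_i &= \tau, \qquad i=1,\ldots,s, \\
(\lambda+1)b_j &= \tau, \qquad j=1,\ldots,n-s, \\
a_i+(\lambda+3)c_i &= \tau, \qquad i=1,\ldots,s,
\end{align*}
where $\tau:=C-\mu/2$.

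The $2\times2$ coupling matrix relating $a_i$ and $c_i$ has determinant $\lambda^2+4\lambda+2$, whose roots are exactly $\lambda=-2\pm\sqrt{2}$; this dictates a natural case split. In Case A (non-zero determinant) each pair $(a_i,c_i)$ is uniquely determined by $\tau$, so all $a_i$, all $b_j$, all $c_i$ collapse to constants $\alpha,\beta,\gamma$. Substituting the resulting expressions back into the orthogonality constraint $s\alpha+(n-s)\beta+s\gamma=0$ reduces the entire problem to the single quadratic
\[
Q(\lambda) := (n+s)\lambda^2+4n\lambda+2n = 0.
\]
One checks $Q(-2+\sqrt{2})=2s(3-2\sqrt{2})>0$; since the vertex of $Q$ sits at $-2n/(n+s)\in(-2,-1]$ (using $n\ge s$), the value $-2+\sqrt{2}$ lies strictly to the right of the vertex, hence strictly to the right of both roots of $Q$. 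In particular, the larger root of $Q$ is strictly less than $-2+\sqrt{2}$.

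In Case B we try $\lambda=-2+\sqrt{2}$. The rows of the coupling matrix are then proportional, and consistency of the system with right-hand side $(\tau,\tau)$ forces $\tau=0$; this in turn gives $b_j=0$, $c_i=-(\lambda+1)a_i=(1-\sqrt{2})a_i$, while the orthogonality condition collapses to $\sum_{i=1}^s a_i=0$. Because $s\ge 2$, this leaves an $(s-1)$-dimensional space of genuine eigenvectors, so $\lambda=-2+\sqrt{2}$ is realized as a stationary value. Combining the two cases and invoking Proposition~\ref{02prop:QEC} gives $\mathrm{QEC}(G)=-2+\sqrt{2}=\mathrm{QEC}(P_4)$, matching the lower bound coming from the isometric embedding of $P_4$ into $G$ along a pendant--clique--clique--pendant path.

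The main obstacle is the Case A bookkeeping: one must track the auxiliary parameter $\tau$ (which couples $\mu$ to the unknown row-sum $C$), dispatch the degenerate subcases $\tau=0$ and $\lambda+1=0$ (which produce only smaller eigenvalues), and finally verify the sign inequality $Q(-2+\sqrt{2})>0$. Once these steps are completed, the argument reduces to the elementary numerical fact $3>2\sqrt{2}$ and no delicate estimate is required.
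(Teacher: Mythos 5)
Your proposal is correct. The paper itself gives no proof of this lemma (it is quoted from \cite[Theorem 4.3]{Baskoro-Obata2021}), but your argument is exactly the Lagrange-multiplier computation via Proposition \ref{02prop:QEC} that the paper employs in its Appendix for $K_m\cup_l K_n$, and the details check out: the three stationary families, the coupling determinant $\lambda^2+4\lambda+2$, the reduction to $Q(\lambda)=(n+s)\lambda^2+4n\lambda+2n$ with $Q(-2+\sqrt2)=2s(3-2\sqrt2)>0$ and vertex in $(-2,-1]$, and the $(s-1)$-dimensional eigenspace at $\lambda=-2+\sqrt2$ (which requires $s\ge2$, as you use) are all verified.
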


\begin{proposition}\label{06prop:P4=G}
For a graph $G$ we have 
$\mathrm{QEC}(G)=\mathrm{QEC}(P_4)=-(2-\sqrt2)$
if and only if $G=K_4*K_3$ or
$G=K_n*(K_2,\dots,K_2)$ ($K_2$ appears $s$ times)
with $n\ge s\ge 2$.
\end{proposition}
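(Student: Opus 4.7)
The plan is to split the claim into ``if'' and ``only if''. The ``if'' direction is short: when $G=K_n*(K_2,\ldots,K_2)$ with $n\ge s\ge 2$ the equality is exactly Lemma \ref{06lem:QEC(Kn(K2...K2)}, and for $G=K_4*K_3$ I would substitute $m=4$, $n=3$ into formula \eqref{05:eqn:formula for Km*Kn} of Corollary \ref{05cor:main formula for QEC with l=1} and simplify $-(1+\sqrt{(3/4)(2/3)}\,)^{-1}=-(1+1/\sqrt2)^{-1}=-(2-\sqrt2)$.

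The ``only if'' direction proceeds by a case analysis on $\diam(\Gamma(G))$. Since $\mathrm{QEC}(G)=-(2-\sqrt2)<\mathrm{QEC}(P_5)$, Proposition \ref{05prop:QEC and diam Gamma(G)} gives $\diam(\Gamma(G))\le 2$; and since $\mathrm{QEC}(G)<-1/2$, Theorem \ref{04thm:necessary condition} says $\Gamma(G)$ is a tree in which any two adjacent maximal cliques meet in exactly one vertex and no three maximal cliques share a vertex.

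If $\diam(\Gamma(G))=0$, then $G=K_n$ and $\mathrm{QEC}(G)=-1\ne -(2-\sqrt2)$. If $\diam(\Gamma(G))=1$, then $G$ has exactly two maximal cliques intersecting in a single vertex by Lemma \ref{04lem:two maximal cliques of G with <-1/2} and Proposition \ref{05prop:two maximal cliques}, so $G=K_m*K_n$ with $m\ge n\ge 2$; using Corollary \ref{05cor:main formula for QEC with l=1}, the equation $\mathrm{QEC}(K_m*K_n)=-(2-\sqrt2)$ reduces to $(1-1/m)(1-1/n)=1/2$, equivalently $(m-2)(n-2)=2$, whose only solution with $m\ge n\ge 2$ is $(m,n)=(4,3)$. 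If $\diam(\Gamma(G))=2$, then $\Gamma(G)$ is a tree of diameter two, hence a star $K_{1,s}$ with $s\ge 2$; writing the center as $K_n$ and the $s$ leaves as $K_{m_1},\ldots,K_{m_s}$ (ordered so that $m_1\ge\cdots\ge m_s\ge 2$), the single-vertex and no-triple-intersection conditions from Theorem \ref{04thm:necessary condition} imply that the $s$ attachment points inside $K_n$ are pairwise distinct, so $G=K_n*(K_{m_1},\ldots,K_{m_s})$ with $n\ge s$. Lemma \ref{06lem:P4<Kn(m1...ms)} forbids $m_1\ge 3$, so every $m_i=2$, yielding the second family from the statement.

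The main obstacle I anticipate is the structural reconstruction in the diameter-two case: one must carefully use Theorem \ref{04thm:necessary condition} to verify that the center clique $K_n$ is glued to each leaf clique $K_{m_i}$ at a single vertex and that these $s$ gluing vertices are pairwise distinct, so that $G$ really has the form $K_n*(K_{m_1},\ldots,K_{m_s})$ pictured in Figure \ref{fig:K_n(m_1,...m_s)}. Everything else reduces to routine use of the closed formula in Corollary \ref{05cor:main formula for QEC with l=1} and the value comparisons encapsulated in Lemmas \ref{06lem:P4<Kn(m1...ms)} and \ref{06lem:QEC(Kn(K2...K2)}.
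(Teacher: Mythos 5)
Your proposal is correct and follows essentially the same route as the paper: a case analysis on $\diam(\Gamma(G))\in\{0,1,2\}$ (justified by Proposition \ref{05prop:QEC and diam Gamma(G)} and Theorem \ref{04thm:necessary condition}), the closed formula of Corollary \ref{05cor:main formula for QEC with l=1} yielding $(m,n)=(4,3)$ in the two-clique case, and Lemmas \ref{06lem:P4<Kn(m1...ms)} and \ref{06lem:QEC(Kn(K2...K2)} handling the star case. The extra details you supply (the algebraic reduction to $(m-2)(n-2)=2$ and the reconstruction of $G=K_n*(K_{m_1},\dots,K_{m_s})$ from the tree structure) are correct elaborations of steps the paper treats as already established in Subsections 6.2--6.3.
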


\begin{proof}
As is discussed in Subsection \ref{06subsec:G<P_4},
a graph $G$ with $\diam(\Gamma(G))=1$ is of the form
$K_m*K_n$ with $m\ge n\ge2$.
Then, using the explicit formula for $\mathrm{QEC}(K_m*K_n)$ 
in Corollary \ref{05cor:main formula for QEC with l=1},
we see easily that
$\mathrm{QEC}(K_m*K_n)=\mathrm{QEC}(P_4)$ if and only if
$m=4$ and $n=3$.

A graph $G$ with $\diam(\Gamma(G))=2$ is of the form
$G=K_n*(K_{m_1},\dots, K_{m_s})$ with $n\ge s\ge2$
and $m_1\ge m_2\ge\dotsb\ge m_s\ge2$.
By Lemma \ref{06lem:P4<Kn(m1...ms)},
$\mathrm{QEC}(G)\le \mathrm{QEC}(P_4)$ may occur 
only when $m_1=m_2=\dotsb= m_s=2$.
On the other hand, in that case,
the equality $\mathrm{QEC}(G)=\mathrm{QEC}(P_4)$ holds
by Lemma \ref{06lem:QEC(Kn(K2...K2)}.
\end{proof}

\begin{proposition}\label{06prop:diam=2 and P4<G<P5}
For $m\ge n\ge 2$ we have
$\mathrm{QEC}(P_4)<\mathrm{QEC}(K_m*K_n)<\mathrm{QEC}(P_5)$
if and only if
\begin{enumerate}
\item[\upshape (i)] $n=3$ and $5\le m\le 54$;
\item[\upshape (ii)] $n=4$ and $4\le m\le 7$;
\item[\upshape (iii)] $n=m=5$.
\end{enumerate}
\end{proposition}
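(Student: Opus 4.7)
The plan is to work exclusively with the explicit formula from Corollary \ref{05cor:main formula for QEC with l=1} and to write
\[
\mathrm{QEC}(K_m*K_n)=-\bigl(1+\sqrt{q(m,n)}\,\bigr)^{-1},
\qquad q(m,n):=\Bigl(1-\tfrac{1}{m}\Bigr)\Bigl(1-\tfrac{1}{n}\Bigr).
\]
The right-hand side is a strictly increasing function of $q(m,n)$, and hence of each of $m$ and $n$. Squaring after isolating the radical converts the two target bounds into the single algebraic condition
\[
\tfrac{1}{2}<q(m,n)<\tfrac{3+\sqrt{5}}{8},
\]
since $\mathrm{QEC}(P_4)=-(2-\sqrt{2})=-(1+\sqrt{1/2})^{-1}$ and, using $\sqrt{(3+\sqrt{5})/8}=(1+\sqrt{5})/4$, also $\mathrm{QEC}(P_5)=-(5-\sqrt{5})/5=-(1+\sqrt{(3+\sqrt{5})/8})^{-1}$. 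For each fixed $n\ge 2$ the admissible set of $m\ge n$ is therefore an interval of integers, cut out by two linear inequalities in $1/m$.

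The plan is then to run through $n=2,3,4,5,6$ in turn. For $n=2$ we have $q(m,2)=(1-1/m)/2<1/2$ for every finite $m$, so the lower bound fails. For $n=3$, Proposition \ref{06prop:P4=G} already identifies $K_4*K_3$ as the equality case of the lower bound, forcing $m\ge 5$; the upper bound rearranges to $m<4(7+3\sqrt{5})=28+12\sqrt{5}$, and $54<28+12\sqrt{5}<55$ (from $676<720<729$) gives $5\le m\le 54$. For $n=4$, the lower endpoint is $m=4$ since $\mathrm{QEC}(K_4*K_4)=-4/7>\mathrm{QEC}(P_4)$ (equivalently $100>98$), and the upper bound gives $m<(9+3\sqrt{5})/2<8$ (from $45<49$), hence $4\le m\le 7$. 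For $n=5$, only $m=5$ survives: $\mathrm{QEC}(K_5*K_5)=-5/9$ satisfies the upper bound (equivalently $9\sqrt{5}>20$, i.e.\ $405>400$) and the lower bound (equivalently $169>162$), while $\mathrm{QEC}(K_6*K_5)=\sqrt{6}-3$ fails the upper bound (reducing to $45>20\sqrt{5}$, i.e.\ $2025>2000$). Finally, for $n\ge 6$, $q(m,n)\ge q(6,6)=25/36$ already exceeds $(3+\sqrt{5})/8$ (equivalently $23>9\sqrt{5}$, i.e.\ $529>405$), so no graph arises.

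The conceptual structure is thus entirely clean, and the main obstacle is numerical tightness rather than any theoretical difficulty. Each boundary integer has to be certified by an exact comparison between $\sqrt{2}$ or $\sqrt{5}$ and a rational number; decimal estimates are not safe. The tightest such comparison is at $m=54$, $n=3$: confirming that the strict upper inequality still holds there requires $162\sqrt{5}>362$, equivalently $131220>131044$, and this is what actually pins down the upper endpoint of the $n=3$ interval. Once every boundary has been verified by a squaring-and-compare step of this kind, assembling the three regimes (i)--(iii) gives the proposition.
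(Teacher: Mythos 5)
Your argument is correct and is exactly the computation the paper leaves implicit: its proof of this proposition is simply ``straightforward by the explicit formula for $\mathrm{QEC}(K_m*K_n)$,'' and you have carried out that verification, reducing the two-sided bound to $\tfrac12<\bigl(1-\tfrac1m\bigr)\bigl(1-\tfrac1n\bigr)<\tfrac{3+\sqrt5}{8}$ via monotonicity and certifying each boundary integer by exact rational comparisons. All the numerical checks you list are accurate, so nothing further is needed.
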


\begin{proof}
Straightforward by
the explicit formula for $\mathrm{QEC}(K_m*K_n)$.
\end{proof}

By Proposition \ref{06prop:diam=2 and P4<G<P5} 
all graphs $G$ such that
$\mathrm{QEC}(P_4)<\mathrm{QEC}(G)<\mathrm{QEC}(P_5)$
with $\diam(\Gamma(G))=1$ are determined.
The case of $\diam(\Gamma(G))=2$,
i.e., $G=K_n*(K_{m_1},\dots, K_{m_s})$ remains to be checked.
The work in this line is in progress.

\setcounter{section}{1}
\section*{Appendix: Calculating $\mathrm{QEC}(K_m\cup_l K_n)$}
\setcounter{equation}{0}
\setcounter{theorem}{0}
\renewcommand{\thesection}{\Alph{section}}
\renewcommand{\theequation}{\Alph{section}.\arabic{equation}}

Let $D$ be the distance matrix of $G=K_m\cup_l K_n$,
where $l\ge1$ and $m,n>l$.
Using the block-matrix form of $D$ as in 
\eqref{05eqn:distance matrix},
we will calculate $\mathrm{QEC}(G)$ explicitly.

For $f\in \mathbb{R}^l$, $g\in \mathbb{R}^{m-l}$
and $h\in \mathbb{R}^{n-l}$ we set
\begin{align*}
\psi(f,g,h)
&=\left\langle 
\begin{bmatrix}
f \\ g \\ h
\end{bmatrix},
D\begin{bmatrix}
f \\ g \\ h
\end{bmatrix}
\right\rangle \\
&=\langle \1,f\rangle^2+\langle \1,g\rangle^2+\langle \1,h\rangle^2
  -\langle f,f\rangle^2-\langle g,g\rangle^2-\langle h,h\rangle^2 \\
& \qquad
  +2\langle \1,f\rangle\langle \1,g\rangle
 +2\langle \1,f\rangle\langle \1,h\rangle
 +4\langle \1,g\rangle\langle \1,h\rangle
\end{align*}
and 
\begin{align}
\varphi(f,g,h,\lambda,\mu) 
=\psi(f,g,h)
&-\lambda(\langle f,f\rangle +\langle g,g\rangle +\langle h,h\rangle-1)
\nonumber\\
&-\mu(\langle \1,f\rangle +\langle \1,g\rangle +\langle \1,h\rangle).
\label{aeqn:starting phi}
\end{align}
It then follows from Proposition \ref{02prop:QEC} that
$\mathrm{QEC}(G)$ coincides with the maximum of
$\lambda\in\mathbb{R}$ appearing 
in the stationary points of $\psi(f,g,h,\lambda,\mu)$.
Since $G$ is not complete, it is sufficient to
explore stationary points of $\psi(f,g,h,\lambda,\mu)$
with $\lambda>-1$.

By direct computation together with condition 
\[
\langle \bm{1},f\rangle
+\langle \bm{1},g\rangle
+\langle \bm{1},h\rangle=0
\]
we have
\begin{equation}\label{aeqn:stationary eqn 1}
\frac{\partial \varphi}{\partial f_i}
=-2(\lambda+1)f_i-\mu
=0.
\end{equation}
Then we see that $f_i$ is constant independent of $1\le i\le l$,
say $f_i=\xi$.
Thus, \eqref{aeqn:stationary eqn 1} becomes
\begin{equation}\label{aeqn:for xi}
\xi=-\frac{1}{\lambda+1}\cdot\frac{\mu}{2}\,.
\end{equation}
(From the beginning we may assume that $\lambda>-1$
as noted before.)
Similarly, it follows from
$\partial \varphi/\partial g_i
=\partial \varphi/\partial h_i=0$
that $g_i$ and $h_i$ are respectively constant.
Setting $g_i=\eta$ and $h_i=\zeta$, we obtain
\begin{align}
(\lambda+1)\eta-(n-l)\zeta=-\frac{\mu}{2}\,, 
\label{aeqn:for eta and zeta 11}\\
-(m-l)\eta+(\lambda+1)\zeta=-\frac{\mu}{2}\,,
\label{aeqn:for eta and zeta 12}
\end{align}
and the constraints become
\begin{align}
l\xi+(m-l)\eta+(n-l)\zeta=0, 
\label{aeqn:condition 1} \\
l\xi^2+(m-l)\eta^2+(n-l)\zeta^2=1.
\label{aeqn:condition 2}
\end{align}
Our task is to solve the system of equations
\eqref{aeqn:for xi}--\eqref{aeqn:condition 2}.

In view of
\eqref{aeqn:for eta and zeta 11}
and \eqref{aeqn:for eta and zeta 12}
we set
\begin{equation}\label{aeqn:delta}
\Delta=\det\begin{bmatrix}
\lambda+1 & -(n-l) \\
-(m-l) & \lambda+1
\end{bmatrix}
=(\lambda+1)^2-(m-l)(n-l).
\end{equation}

(Case I) $\Delta\neq0$.
The equations \eqref{aeqn:for eta and zeta 11}
and \eqref{aeqn:for eta and zeta 12}
have a unique solution:
\begin{equation}\label{aeqn:eta and zeta}
\eta=-\frac{1}{\lambda+n+1-l}\cdot\frac{\mu}{2}\,,
\qquad
\zeta=-\frac{1}{\lambda+m+1-l}\cdot\frac{\mu}{2}\,.
\end{equation}
Inserting \eqref{aeqn:for xi} and \eqref{aeqn:eta and zeta}
into \eqref{aeqn:condition 1}, 
we obtain
\begin{equation}\label{aeqn:equation for lambda}
l\Delta+(\lambda+1)
\{(m-l)(\lambda+n+1-l)+(n-l)(\lambda+m+1-l)\}=0
\end{equation}
after simple calculation. 
The solutions are easily written down as
\[
\lambda_\pm=-1+
\frac{-(m-l)(n-l)\pm \sqrt{mn(m-l)(n-l)}}{m+n-l}\,.
\]
Checking that $\lambda_+\neq -1$ and $\Delta\neq0$
for $\lambda=\lambda_+$,
we see that $\lambda_+$ is a candidate of
$\mathrm{QEC}(G)$.

(Case II) $\Delta=0$.
From \eqref{aeqn:for eta and zeta 11}
and \eqref{aeqn:for eta and zeta 12} we obtain
\[
(\lambda+n-l+1)\frac{\mu}{2}=0,
\qquad
(\lambda+m-l+1)\frac{\mu}{2}=0.
\]
If $m\neq n$, we obtain $\mu=0$ and $\xi=\eta=\zeta=0$, which
do not fulfill \eqref{aeqn:condition 2}.
Hence there is no stationary points.
Assume that $m=n$.
Then $\lambda=-1-(m-l)$ appears in the stationary points.
However, since we are only interested in $\lambda>-1$,
there exists no candidate for our 
$\mathrm{QEC}(G)$ in the case of $\Delta=0$.

Finally, we conclude from (Case I) and (Case II) that
$\lambda_+=\mathrm{QEC}(G)$.

\section*{Acknowledgements} 
NO is grateful to Institut Teknologi Bandung (ITB)
for their hospitality as an adjunct professor during 2023--2024.
This work is also supported in part by
JSPS Grant-in-Aid for Scientific Research No.~23K03126.
The authors thank the referees for their valuable comments 
which improved the presentation of this paper,
in particular, for the useful expression in Remark \ref{05rem:on Thm 5.3}
and the recent paper \cite{Guo-Zhou2023}.


\end{document}